\numberwithin{equation}{section}
\newtheorem{thm}{Theorem}[section]
\newtheorem{prop}[thm]{Proposition}
\newtheorem{defn}[thm]{Definition}
\newtheorem{rmk}[thm]{Remark}
\newcommand{\cJ}{\mathcal{J}}
\newcommand{\cM}{\mathcal{M}}
\newcommand{\eps}{{\varepsilon}}
\newcommand{\qed}{\hfill \mbox{\raggedright \rule{.07in}{.1in}}}
\newenvironment{proof}{\vspace{1ex}\noindent{\bf
Proof}\hspace{0.5em}}{\hfill\qed\vspace{1ex}}
\newenvironment{pfof}[1]{\vspace{1ex}\noindent{\bf Proof of
#1}\hspace{0.5em}}{\hfill\qed\vspace{1ex}}
 \newcommand{\tY}{{\widetilde{Y}}}
 \newcommand{\tF}{\widetilde{F}}
 \newcommand{\tW}{\widetilde{W}}
\newcommand{\R}{{\mathbb R}}
\newcommand{\Z}{{\mathbb Z}}
\newcommand{\N}{{\mathbb N}}
\newcommand{\E}{{\mathbb E}}
\newcommand{\PP}{{\mathbb P}}
\newcommand{\tZ}{{\widetilde Z}}
\newcommand{\sgn}{\operatorname{sgn}}
\newcommand{\SMALL}{\textstyle}
\newcommand{\BIG}{\displaystyle}
\newcounter{gagcomment}
\title{Simulation of non-Lipschitz stochastic differential equations driven by $\alpha$-stable noise: a method based on deterministic homogenisation}
\author{
	Georg A. Gottwald\thanks{School of Mathematics and Statistics, University of Sydney, Sydney 2006 NSW, Australia}
	\and
	Ian Melbourne\thanks{Mathematics Institute, University of Warwick, Coventry, CV4 7AL, UK}
}
\date{20 April 2020}
\begin{document}

\maketitle

%%%%%%%%%%%%%%%%%%%%%%%%%%%%%%%%%%%%%%%%%%%%%%%%%%

\begin{abstract}
We devise an explicit method to integrate $\alpha$-stable stochastic differential equations (SDEs) with non-Lipschitz coefficients. To mitigate against numerical instabilities caused by unbounded increments of the L\'evy noise, we use a deterministic map which has the desired SDE as its homogenised limit. Moreover, our method naturally overcomes difficulties in expressing the Marcus integral explicitly. We present an example of an SDE with a natural boundary showing that our method respects the  boundary whereas Euler-Maruyama discretisation fails to do so. As a by-product we devise an entirely deterministic method to construct $\alpha$-stable laws.
\end{abstract}

%%%%%%%%%%%%%%%%%%%%%%%%%%%%%%%%%%%%%%%%%%%%%%%%%%

\section{Introduction}
Stochastic differential equations (SDEs) are frequently used to capture model uncertainty in as diverse areas as finance, engineering, biology and physics. The noise driving the SDE is often heuristically introduced based on the experience of the modeller.   In certain cases, the driving noise is derived by means of functional limit theorems, eg.\ in the context of fast-slow systems or weakly coupled systems of distinguished degrees of freedom with an infinite reservoir \cite{GivonEtAl04}. Recently, SDEs driven by non-Gaussian noise, in particular by L\'evy processes which involve discontinuous jumps of all sizes, have attracted attention. Anomalous diffusion and L\'evy flights are found in diverse systems ranging from biology \cite{DieterichEtAl08,WeigelEtAl11,Fernandez03,GetzSaltz08,BartumeusLevin08}, chemistry \cite{SanchoEtAl04,ReuveniEtAl10}, fluid dynamics  \cite{SolomonEtAl1994} to climate science \cite{Ditlevsen99,SeoBowman00,HuberEtAl01}. 

We consider here SDEs of the form
\begin{align}
dZ = a(Z)\, dt + b(Z) \diamond dW
\label{eq:SDE0}
\end{align}
where $Z\in\R^d$ and $W$ denotes an $m$-dimensional L\'evy process. 
The diamond denotes that stochastic integrals are to be interpreted in the Marcus sense~\cite{Marcus81}.  (We refer to~\cite[p.~272]{Applebaum} for a discussion of the Marcus integral.)
The drift term $a:\R^d\to\R^d$ and diffusion term $b:\R^d\to\R^{d\times m}$ are assumed to be smooth but we are particularly interested in situations where they are not globally Lipschitz on $\R^d$.  As is standard in the literature on numerical analysis of SDEs, we use the word  ``non-Lipschitz'' when referring to terms that  are smooth but not globally Lipschitz.

The Marcus interpretation for the stochastic integral in~\eqref{eq:SDE0} is known to arise naturally in SDEs driven by L\'evy processes, since it is the integral that transforms under the usual laws of calculus~\cite[Theorem~4.4.28]{Applebaum}.  As such, it plays the same role for L\'evy processes as the Stratonovich integral for Brownian motion.
Accordingly, if an SDE driven by a L\'evy process is to model a physical system and is therefore derived as a rough limit of an inherently smooth underlying microscopic dynamical system, then one anticipates that the driving noise should be interpreted in the sense of Marcus. 
Indeed, for deterministic fast-slow systems converging to an SDE driven by a L\'evy process, the Marcus interpretation has been proved to prevail 
by~\cite{ChevyrevEtAl19, GottwaldMelbourne13c}.
(However, if more than one time-scale is involved, then the noise may be Marcus, It\^o or neither \cite{LiEtAl14,ChechkinPavlyukevich14}.)
\\

 The numerical simulation of SDEs of the form (\ref{eq:SDE0}) poses three challenges: (i) the Marcus integral, (ii) non-Lipschitz drift and diffusion terms, (iii) nonexplicit nature of the densities for the increments of $W$.
These challenges are unrelated and typically require separate attention; some are better understood than others. We present here a method which naturally addresses all three problems simultaneously. Before we present the ideas behind our method, we discuss the particular problems of each challenge. 

(i) Marcus integrals $\int_0^t b(Z(s))\diamond dW(s)$ are well-defined but involve cumbersome expressions and sums over infinitely many jumps \cite{Applebaum,ContTankov,ChechkinPavlyukevich14}. 
In particular, the situation is quite different from the It\^o-Stratonovich correction where one can pass between It\^o and Stratonovich integrals by modifying the drift term. When numerically approximating Marcus integrals, several methods exist to discretise the integral (see \cite{AsmussenGlynn,Grigoriu09,Falsone18} and references therein). These methods typically use that a symmetric L\'evy process can be approximated as a sum of a compound Poisson process and a Brownian motion \cite{AsmussenRosinski01}. However, for nonsymmetric L\'evy processes, Brownian motion is not able to capture the skewness of the small jumps, presenting further difficulties for the numerical simulation of the corresponding Marcus SDE.

(ii) A well-known problem arises when numerically simulating SDEs with non-Lipschitz drift and diffusion terms. To illustrate why this may present a problem, consider the SDE with constant diffusion and non-Lipschitz drift term,  $dZ = -Z^3 dt + dW$ where $W$ is Brownian motion.
(The nature of the noise is not relevant in the following argument, just that the increments are unbounded).
  Its Euler-Maruyama discretisation \cite{Maruyama55,Milstein,KloedenPlaten} is given by
\begin{align*}
Z_{n+1}=Z_n-Z_n^3\, \Delta t + \sqrt{\Delta t}\,\Delta W_n,
\end{align*}
with normally distributed increments $\Delta W_n$. Since such increments are unbounded, for each fixed time step $\Delta t$ there is a non-zero probability that increments are so large as to lead to a numerical instability whereby the $Z_n$ explode alternating in sign. In particular, Euler-Maruyama fails to strongly converge in the mean-square sense and also fails to weakly converge to solutions of the SDE \cite{HutzenthalerEtAl11}. Recently, several numerical methods were designed to overcome the problem of non-Lipschitz drift terms \cite{HighamEtAl02,MilsteinTretyakov05,HutzenthalerEtAl12,Sabanis13,TretyakovZhang13,DareiotisEtAl16,Mao16,KumarSabanis17,KellyLord16}. However, to the best of our knowledge, no methods have been designed to treat with the presence of non-Lipschitz diffusion terms.

(iii)  The increments of Brownian motion are normally distributed with density function given by the well-known Gaussian formula. For the increments of L\'evy processes, the densities are not given explicitly in general. Various methods have been devised that numerically generate the desired probability densities \cite{ChambersMallowsStuck76}.  Of the three issues we have mentioned, this is the only one that could be said to be completely resolved, though even here there is the question of combining it with methods dealing with issues (i) and (ii).
\\

To bypass the cumbersome direct approximation of the Marcus integral and the difficulties associated with nonsymmetric L\'evy processes mentioned above, and to avoid the problem of unbounded noise increments, we propose an entirely deterministic method, based on homogenisation, to integrate SDEs of the form (\ref{eq:SDE0}). In particular, we use that a discrete deterministic fast-slow system reduces in the limit of infinite time scale separation to an SDE \cite{GottwaldMelbourne13c,KellyMelbourne16,CFKMZsub,ChevyrevEtAl19}. In the case of intermittent fast dynamics, the resulting SDE is driven by a L\'evy process, moreover the noise is of Marcus type \cite{GottwaldMelbourne13c,ChevyrevEtAl19}. We employ statistical limit theorems to design an explicit fast intermittent map and an explicit observable of the fast dynamics that yields $\alpha$-stable increments with user-specified values of the driving L\'evy process. The jumps of the L\'evy process are approximated by many small jumps generated by the fast dynamics. Since the fast dynamics evolves on a compact set, these increments are naturally bounded, which mitigates numerical instability caused by the non-Lipschitz terms.\\

The paper is organised as follows. We review the definitions of $\alpha$-stable laws in Section~\ref{sec:stable} and provide algorithms to deterministically generate $\alpha$-stable laws and numerical illustrations of its accuracy. Section~\ref{sec:Levy} contains the corresponding material for L\'evy processes.
Section~\ref{sec:SDE} constitutes the main result of our work and introduces the numerical method to integrate SDEs driven by a L\'evy process using deterministic homogenisation.  Two examples of scalar SDEs are used to illustrate the method. In Example 1, our results are in line with Euler-Maruyama discretisation (with taming).  However, Example 2 has a natural boundary at $Z=0$ which is treated correctly by our method but not by the Euler-Maruyama method.
The proofs for our methods are provided in Section~\ref{sec:proofs}. We conclude with a discussion and an outlook in Section~\ref{sec:disc}. 
  
%%%%%%%%%%%%%%%%%%%%%%%%%%%%%%%%%%%%%%%%%%%%%%%%%%

\section{Generating $\alpha$-stable laws}
\label{sec:stable}

In this section, we show how to generate stable laws deterministically.
In Subsection~\ref{sec:def}, we review the definitions.
In Subsection~\ref{sec:Thaler}, we describe the Thaler map which will be used to generate the fast intermittent dynamics.
Our numerical algorithm for generating stable laws is presented in Subsection~\ref{sec:stablegen}.  Numerical illustrations of its accuracy are given in Subsection~\ref{sec:stablenum}.

\subsection{Definition of stable laws}
\label{sec:def}

A random variable $X$ is called a {\em (strictly) stable law} if there exist 
constants $b_n>0$ such that
independent copies $X_1,X_2,\ldots$ of $X$ satisfy 
\[
\SMALL b_n^{-1}\sum_{j=1}^n X_j=_d X\quad\text{for all $n\ge1$.}
\]
Stable laws are completely classified, see~\cite{Feller66,Applebaum}.
If $\E X^2<\infty$, then $X$ is normally distributed, $X\sim N(0,\sigma^2)$ where $\sigma^2=\E X^2$, and we can take $b_n=n^{1/2}$.
We are interested here in the case $\E X^2=\infty$.

There are various parameters (with various notational conventions).
The most important is the stability parameter or scaling exponent $\alpha\in(0,2]$.  
A suitable choice of $b_n$ is then given by $b_n=n^{1/\alpha}$.
The case $\alpha=2$ corresponds to the normal distribution described above, while  $\alpha=1$ corresponds to the Cauchy distribution which is a special case that we do not consider in this paper.
We restrict attention to the remaining stable laws
$X_{\alpha,\eta,\beta}$ whose characteristic function is given by
\[
\E(e^{itX_{\alpha,\eta,\beta}})=\exp\Bigl\{-\eta^\alpha|t|^\alpha\Bigl(1-i\beta\sgn(t)\tan\frac{\alpha\pi}{2}\Bigr)\Bigr\},
\]
where $\alpha\in(0,1)\cup(1,2)$, $\eta>0$ and $\beta\in[-1,1]$.
Such stable laws satisfy  $\E|X_{\alpha,\eta,\beta}|^p<\infty$ for $p<\alpha$ and $\E|X_{\alpha,\eta,\beta}|^\alpha=\infty$. 
In the case $\alpha\in(1,2)$ the stable law is centered, i.e.\ $\E X_{\alpha,\eta,\beta}=0$. 
A stable law is called {\em one-sided} (or {\em totally skewed}) if $\beta=\pm1$ and {\em symmetric} if $\beta=0$.

\begin{rmk} \label{rmk:eta}
It follows from the definitions that
$X_{\alpha,c\eta,\beta}=
cX_{\alpha,\eta,\beta}$ for $c>0$ and
$X_{\alpha,\eta,-\beta}=-
X_{\alpha,\eta,\beta}$.
\end{rmk}

%%%%%%%%%%%%%%%%%%%%%%%%%%%%%%%%%%%%%%%%%%%%%%%%%%

\subsection{The Thaler map}
\label{sec:Thaler}

In this section, we show how to generate all stable laws of the type $X_{\alpha,\eta,\beta}$ with $\alpha\in(0,1)\cup(1,2)$, $\eta>0$, $\beta\in[-1,1]$, using a deterministic dynamical system. 
In particular we shall use observables of maps introduced in the study of intermittency by Pomeau \& Manneville \cite{PomeauManneville80}. 
Particularly convenient for our purposes is the family of maps $T:[0,1]\to[0,1]$ considered by Thaler~\cite{Thaler80,Thaler00} 
\begin{align} 
\label{eq:Thaler}
 Tx=x\Bigl(1+\Bigl(\frac{x}{1+x}\Bigr)^{\gamma-1}-x^{\gamma-1}\Bigr)^{1/(1-\gamma)}\bmod1.
\end{align}
Here, $\gamma\in[0,1)\cup(1,\infty)$ is a real parameter.
Let $x^\star\in(0,1)$ be the unique solution to the equation
\begin{align} \label{eq:x1}
{x^\star}^{1-\gamma}+(1+{x^\star})^{1-\gamma}=2.  
\end{align}
There are two branches defined on the intervals $[0,x^\star]$, $[x^\star,1]$. 
See Figure~\ref{fig:ThalerMap} for a depiction of the Thaler map.

\begin{figure}
	\centering
	\includegraphics[height=19pc,width=19pc]{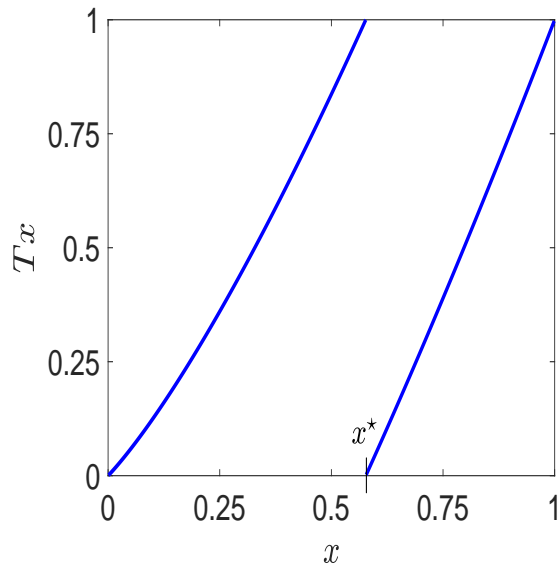}		
	\caption{Thaler map for $\gamma=0.625$ showing the two branches with domains $[0,x^\star]$ and $[x^\star,1]$ where $x^\star\approx 0.577$.}
	\label{fig:ThalerMap}
\end{figure}
\begin{rmk}  A useful alternative expression for the Thaler map is
\[
Tx=(x^{1-\gamma}+(1+x)^{1-\gamma}-1)^{1/(1-\gamma)}\bmod1.
\]
From this it is clear that $T$ has two increasing full branches and that
$x^\star$ is given by the formula mentioned above.
\end{rmk}

Unlike other intermittent maps such as the 
map $x\mapsto x+x^2\bmod1$ considered by Manneville~\cite{Manneville80} or
the Liverani-Saussol-Vaienti map \cite{LiveraniEtAl99}, the Thaler map allows for analytic expressions, both for the map and the invariant density. In particular, for each $\gamma\in [0,1)$ there exists a unique invariant probability density
 $\dfrac{1-\gamma}{2^{1-\gamma}}h$ where
\begin{align} 
\label{eq:h}
 h(x)=x^{-\gamma}+(x+1)^{-\gamma}.
\end{align}
For $\gamma>1$, the density $h$ is still well-defined and invariant, but it is nonintegrable so the corresponding invariant measure is infinite. For $\gamma=0$, the Thaler map reduces to the uniformly expanding doubling map $Tx=2x\bmod1$ with $h\equiv1$ corresponding to Lebesgue measure  on the unit interval; here correlations decay exponentially. 
For $\gamma\in(0,1)$, the Thaler map is nonuniformly expanding with a neutral fixed point at $x=0$ and
correlations decay algebraically with rate $n^{-(\gamma^{-1}-1)}$ \cite{Hu04,Young99}.  
The rate $n^{-(\gamma^{-1}-1)}$ is sharp by~\cite{Gouezel04a,Sarig02}.
This slow down in the decay of correlation as $\gamma$ increases is caused by the trajectory spending prolonged times near the neutral fixed point $x=0$. 
Figure~\ref{fig:Thalerxn} shows a trajectory for $\gamma=0.625$ where one clearly sees the laminar dynamics near $x=0$. 

\begin{figure}
	\centering
	\includegraphics[width=19pc]{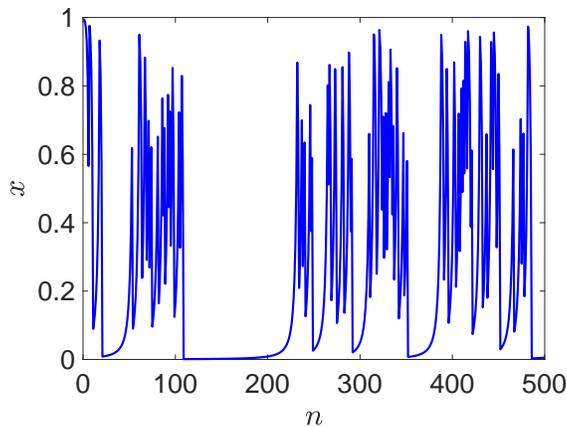}		
	\caption{Time series $x_n$ for the Thaler map with $\gamma=0.625$ corresponding to $\alpha=1.6$}
	\label{fig:Thalerxn}
\end{figure}

The above discussion shows that correlations are summable if and only if $\gamma<\frac12$,  leading to the following central limit theorem (CLT).
Let $v:[0,1]\to\R$ be a H\"older observable and suppose that $v$ has mean zero with respect to the invariant probability measure $d\mu=h\,dx$.  Define the Birkhoff sum $v_n=\sum_{j=0}^{n-1}v\circ T^j$ and the variance $\sigma^2\ge0$ (typically nonzero) via the Green-Kubo formula
$\sigma^2=\int v^2\,d\mu+2\sum_{n=1}^\infty \int v\,v\circ T^n\,d\mu$.
Regarding $n^{-1/2}v_n$ as a family of random variables on the probability space
$([0,1],\mu)$ (where the randomness exists solely in the initial condition $x_0\in [0,1]$ used to compute $n^{-1/2}v_n$) it follows from 
Liverani~\cite{Liverani96} that the CLT holds: $n^{-1/2}v_n\to_d N(0,\sigma^2)$.

For $\gamma\ge\frac12$, correlations are not summable and the CLT breaks down for observables with $v(0)\neq0$ that ``see'' the neutral fixed point at $x=0$. Heuristically the reason for this is that the 
Birkhoff sum $v_n$  experiences ballistic behaviour with almost linear behaviour in the laminar region near $x=0$ and the small jumps of size $v(0)$ accumulate into a single large jump incompatible with the CLT.
Indeed, Gouezel~\cite{Gouezel04} (see also~\cite{Zweimuller03}) proved that
for $\gamma\in (\tfrac12,1)$, the CLT is replaced by a one-sided stable limit law 
$n^{-\gamma}v_n\to_d X_{\alpha,\eta,\beta}$ 
with $\alpha=\gamma^{-1}$ and $\beta=\sgn v(0)$.

For $\gamma\ge1$, the density $h$ is not integrable and the Birkhoff sums $v_n$ (normalised) do not converge in distribution to a stable law.  However, the method in~\cite{Gouezel07} reduces via inducing~\cite{MT04} to an ``induced'' system on $Y=[x^*,1]$.  
A calculation using~\eqref{eq:x1} shows that $\int_{x^*}^1 h\,dx = \dfrac{2^{1-\gamma}-1}{1-\gamma}$ which is finite for all $\gamma\in[0,1)\cup(1,\infty)$.  
Hence we can define a probability measure $\mu_Y$ on $Y$ with
density $\dfrac{1-\gamma}{2^{1-\gamma}-1} h|_Y$.
For the induced system on the probability space $(Y,\mu_Y)$, convergence to stable laws was studied by~\cite{AaronsonDenker01} and holds in the full range $\gamma\in(\frac12,1)\cup(1,\infty)$.

To prove convergence to stable laws in this section and to L\'evy processes in Section~\ref{sec:Levy}, we use the induced system on $Y$, and hence are able to deterministically generate $\alpha$-stable random variables and processes for $\alpha\in(0,1)\cup(1,2)$. However, for our main application to SDEs in Section~\ref{sec:SDE}, we have to work with the full system on $[0,1]$ and hence our results there are restricted to $\alpha\in(1,2)$.

The aim in this section is to specify appropriate observables $v$ of the Thaler map leading to stable laws as limits in distribution.

%%%%%%%%%%%%%%%%%%%%%%%%%%%%%%%%%%%%%%%%%%%%%%%%%%

\subsection{Numerical algorithm for generating stable laws}
\label{sec:stablegen}

We begin by describing how to generate one-sided stable laws, i.e. those with $\beta=\pm 1$ where all jumps are in the same direction (positive or negative). 

Fix $\alpha\in(0,1)\cup(1,2)$ 
and consider the Thaler map (\ref{eq:Thaler}) with 
$\gamma=\alpha^{-1}$,
Define the set $Y=(x^\star,1]$ where $x^\star$ is as given in~\eqref{eq:x1}.
Starting with a randomly chosen initial condition $y_0\in Y$
(random with respect to the invariant density $h$ in~\eqref{eq:h} restricted to $Y$), we compute the iterates $T^k$ of the map $T$ noting the return times to $Y$.
More precisely, let $\tau_0\ge1$ be least such that $T^{\tau_0}y_0\in Y$.
Then let $\tau_1\ge1$ be least such that $T^{\tau_0+\tau_1}y_0\in Y$.
Inductively, once $\tau_0,\dots,\tau_{j-1}$ are defined, we let
$\tau_j\ge1$ be least such that 
$T^{\tau_0+\dots+\tau_j}y_0\in Y$.
Note that $\tau_0,\tau_1,\ldots$ is a sequence of random variables where the randomness originates from the choice of $y_0$.

Define 
\begin{align}
d_\alpha=\alpha^\alpha\frac{1-\gamma}{2^{1-\gamma}-1}g_\alpha,
\qquad
\ell_\alpha =\begin{cases} \hphantom{YYY} 0 & \alpha\in(0,1)
\\ (1-2^{\gamma-1})^{-1} & \alpha\in(1,2)\end{cases}, 
\label{eq:da}
\end{align}
where 
\begin{align} 
\label{eq:ga}
g_\alpha=\Gamma(1-\alpha)\cos\frac{\alpha\pi}{2}.
\end{align}

\begin{thm}   \label{thm:stable1}
Fix $\alpha\in(0,1)\cup(1,2)$.  Then
\[
n^{-\gamma} d_\alpha^{-\gamma}\SMALL (\sum_{j=0}^{n-1}\tau_j-n\ell_\alpha)\to_d X_{\alpha,1,1}\quad\text{as $n\to\infty$}.
\]
That is,
\[
\mu_Y\big\{y_0\in Y:n^{-\gamma} d_\alpha^{-\gamma}\SMALL (\sum_{j=0}^{n-1}\tau_j(y_0)-n\ell_\alpha)\le c\big\}\to \PP(X_{\alpha,1,1}\le c)\quad\text{as $n\to\infty$}
\]
for all $c\in\R$.
\end{thm}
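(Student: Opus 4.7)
The plan is to proceed via the standard dynamical-systems route to stable limit theorems: (a) recognise $\sum_{j=0}^{n-1}\tau_j$ as a Birkhoff sum for the first-return map $F = T^\phi:Y \to Y$, where $\phi$ denotes the first-return time; (b) compute the tail asymptotics of $\phi$ precisely; (c) apply an abstract stable limit theorem for induced Gibbs--Markov systems; (d) identify the stable-law parameters.

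Because the Thaler map has two full branches onto $[0,1]$, the induced map $F$ is a (full-branch) Gibbs--Markov map with respect to the countable partition $\{\phi = n\}_{n \geq 1}$, with distortion controlled by the analytic form of $T$. Under $\mu_Y$, the random variables $\tau_j = \phi \circ F^j$ form a stationary sequence. Kac's formula combined with the integral $\mu(Y) = 1 - 2^{\gamma-1}$ (read off from~\eqref{eq:h}) yields $\E_{\mu_Y}[\phi] = \ell_\alpha$ when $\gamma \in (\tfrac12,1)$; for $\gamma > 1$ the invariant measure $\mu$ is infinite, so $\phi$ has infinite mean and no centering is needed, consistent with $\ell_\alpha = 0$.

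The heart of the argument is the tail estimate
\[
\mu_Y(\phi > n) \sim c_\alpha n^{-\alpha}, \qquad c_\alpha = \alpha^\alpha\frac{1-\gamma}{2^{1-\gamma}-1}.
\]
A Taylor expansion of~\eqref{eq:Thaler} at the neutral fixed point gives $Tx - x = x^{\gamma+1} + O(x^{2\gamma+1})$, and the substitution $u = \gamma^{-1}x^{-\gamma}$ linearises the dynamics on $[0,x^\star]$ to $u \mapsto u - 1 + o(1)$. Hence the escape-time levels $y_n$, defined by $y_0 = x^\star$ and $y_{n+1} = (T|_{[0,x^\star]})^{-1}(y_n)$, satisfy $y_n \sim (\gamma n)^{-\alpha}$. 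Since $\{\phi > n\} \cap Y$ coincides (up to boundary effects) with $\{y \in Y : Ty \in [0,y_{n-1}]\}$, pulling back by the right branch of $T$ (which has bounded nonzero derivative at $x^\star$) and integrating the explicit density $h$ against this small interval yields the stated constant $c_\alpha$.

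With the tail in hand, apply the Aaronson--Denker stable limit theorem for Birkhoff sums over Gibbs--Markov maps: the sequence $n^{-\gamma}\sum_{j=0}^{n-1}(\tau_j - \ell_\alpha)$ has the same limiting distribution as the corresponding sum of i.i.d.\ copies of $\phi - \ell_\alpha$. Classical domain-of-attraction theory (Feller, Vol.~II, \S~XVII.5) identifies this limit as $X_{\alpha,\eta,1}$ with $\eta^\alpha = c_\alpha g_\alpha$; in view of the definition~\eqref{eq:da} of $d_\alpha$, this reads $\eta = d_\alpha^\gamma$. Remark~\ref{rmk:eta} then gives $X_{\alpha,d_\alpha^\gamma,1} = d_\alpha^\gamma X_{\alpha,1,1}$, and dividing through by $d_\alpha^\gamma$ finishes the proof. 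The main obstacle is the tail computation: power-law tails for intermittent maps are standard, but pinning down the exact constant $c_\alpha$ requires the explicit linearising coordinate near $x=0$ (afforded by Thaler's choice of map) together with an exact evaluation of $h$ near $x^\star$ via~\eqref{eq:x1}.
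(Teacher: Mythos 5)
Your proposal is correct and follows essentially the same route as the paper: induce to the first-return Gibbs--Markov map on $Y$, establish the tail asymptotics $\mu_Y(\tau>n)\sim \alpha^\alpha\frac{1-\gamma}{2^{1-\gamma}-1}n^{-\alpha}$ via the linearising coordinate near the neutral fixed point together with the explicit density $h$ near $x^\star$ (the paper's Proposition~\ref{prop:tau}), apply the Aaronson--Denker stable limit theorem, and identify the centering via Kac's lemma (Proposition~\ref{prop:Kac}). The only (immaterial) difference is that you apply the limit theorem to $\tau$ itself and rescale afterwards, whereas the paper pre-normalises the observable to $V=d_\alpha^{-\gamma}\tau$ so that the scale parameter comes out as $\eta=1$ directly.
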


\begin{rmk} By Remark~\ref{rmk:eta}, we can use Theorem~\ref{thm:stable1} to generate all one-sided
$\alpha$-stable laws 
$X_{\alpha,\eta,\pm1}=\pm \eta X_{\alpha,1,1}$.
\end{rmk}

We now extend to the case of general (two-sided) stable laws $X_{\alpha,\eta,\beta}$ with
$\alpha\in(0,1)\cup(1,2)$, $\eta>0$, $\beta\in[-1,1]$.
Again, we can suppose without loss that $\eta=1$.

Let $\tau_j$, $j\ge1$, be the sequence of random variables defined
in above.  Also, define the random variable
$\delta$ with
$\PP(\delta=\pm1)=\frac12(1\pm\beta)$ and let $\delta_j$, $j\ge0$, be a sequence of independent copies of $\delta$.

\begin{thm} \label{thm:stable2}
Fix $\alpha\in(0,1)\cup(1,2)$, $\beta\in[-1,1]$.  
Then
\[
\SMALL n^{-\gamma} d_\alpha^{-\gamma} (\sum_{j=0}^{n-1} \delta_j\tau_j-n\beta \ell_\alpha)\to_d X_{\alpha,1,\beta}\quad\text{as $n\to\infty$}.
\]
\end{thm}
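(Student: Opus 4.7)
The plan is to view $\sum_{j=0}^{n-1}\delta_j\tau_j$ as a Birkhoff sum on a skew-product system and to reduce everything to Theorem~\ref{thm:stable1}. Form the product $\Omega=Y\times\{\pm 1\}^{\N}$ with measure $\mu_Y\otimes\nu$, where $\nu$ is the Bernoulli product with $\nu(\omega_0=\pm1)=p,q$ for $p=\tfrac12(1+\beta)$, $q=\tfrac12(1-\beta)$. Let $F:Y\to Y$ be the first-return map (so $\tau_j=\tau\circ F^j$), let $\sigma$ be the shift, and set
\[
\bar F(y,\omega)=(Fy,\sigma\omega),\qquad \bar\phi(y,\omega)=\omega_0\tau(y).
\]
Since $\delta_j=\omega_j$ under this coupling, $\sum_{j=0}^{n-1}\bar\phi\circ\bar F^j=\sum_{j=0}^{n-1}\delta_j\tau_j$ exactly, so the theorem becomes a stable-limit statement for Birkhoff sums of $\bar\phi$.

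The statistics of $\bar\phi$ match the target normalisation. The Kac formula combined with $\int_{x^*}^1 h\,dx=(2^{1-\gamma}-1)/(1-\gamma)$ yields $\E_{\mu_Y}[\tau]=\ell_\alpha$ for $\alpha\in(1,2)$ (for $\alpha\in(0,1)$ we have $\ell_\alpha=0$ and no centering is needed), and then $\E[\bar\phi]=(p-q)\ell_\alpha=\beta\ell_\alpha$, exactly the centering in the statement. The tails are $\PP(\bar\phi>t)=p\,\mu_Y(\tau>t)$ and $\PP(\bar\phi<-t)=q\,\mu_Y(\tau>t)$, so $\bar\phi$ has the same tail index $\alpha$ and scale constant $d_\alpha$ as $\tau$ but is two-sided with skewness $(p-q)/(p+q)=\beta$. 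Because $\bar F$ is a skew-product of $F$ with a Bernoulli shift, it inherits the mixing and decay-of-correlation properties of $F$, so the Aaronson--Denker / Gou\"ezel argument underlying Theorem~\ref{thm:stable1} applies verbatim and delivers the limit $X_{\alpha,1,\beta}$ rather than $X_{\alpha,1,1}$.

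As a concrete sanity check (and an alternative route that avoids re-running the stable-limit machinery), decompose by sign: with $A_n=\sum_{j<n,\,\delta_j=+1}\tau_j$, $B_n=\sum_{j<n,\,\delta_j=-1}\tau_j$, $n_\pm=\#\{j<n:\delta_j=\pm1\}$, algebra gives
\[
\sum_{j=0}^{n-1}\delta_j\tau_j-n\beta\ell_\alpha=(A_n-n_+\ell_\alpha)-(B_n-n_-\ell_\alpha)+\big((n_+-n_-)-n\beta\big)\ell_\alpha.
\]
A Chebyshev estimate for the iid $\delta_j$ gives $n_+-n_--n\beta=O_P(\sqrt n)$, so the last term scaled by $n^{-\gamma}$ vanishes in probability since $\gamma>1/2$ throughout $(0,1)\cup(1,2)$. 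Applying Theorem~\ref{thm:stable1} to each of $A_n$ and $B_n$ with $n$ replaced by $n_\pm\sim pn,qn$ (using $n_\pm^{-\gamma}=(n_\pm/n)^{-\gamma}n^{-\gamma}\to p^{-\gamma}n^{-\gamma}$, resp.\ $q^{-\gamma}n^{-\gamma}$) and the asymptotic independence of the two subsampled sequences gives $n^{-\gamma}d_\alpha^{-\gamma}(\sum\delta_j\tau_j-n\beta\ell_\alpha)\to_d p^\gamma X^+-q^\gamma X^-$ with $X^\pm$ iid copies of $X_{\alpha,1,1}$. A characteristic-function computation using $\alpha\gamma=1$ (so $p^{\alpha\gamma}=p$, $q^{\alpha\gamma}=q$), $p+q=1$, $p-q=\beta$, verifies $p^\gamma X^+-q^\gamma X^-\sim X_{\alpha,1,\beta}$:
\[
-p|t|^\alpha\big(1-i\sgn(t)\tan\tfrac{\alpha\pi}{2}\big)-q|t|^\alpha\big(1+i\sgn(t)\tan\tfrac{\alpha\pi}{2}\big)=-|t|^\alpha\big(1-i\beta\sgn(t)\tan\tfrac{\alpha\pi}{2}\big).
\]

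The main obstacle is the stable-limit step itself: either extending Aaronson--Denker to the skew-product $\bar F$ with the two-sided heavy-tailed observable $\bar\phi$, or equivalently establishing the asymptotic independence of the $\delta_j=\pm1$ subsampled return-time sequences underlying the decomposition approach. Once either is granted, the remaining Slutsky / continuous-mapping and characteristic-function manipulations are routine.
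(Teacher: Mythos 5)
Your first argument is exactly the paper's proof: the paper forms the same skew product $\tF(y,\omega)=(Fy,\sigma\omega)$ on $\tY=Y\times\{\pm1\}^{\N}$ with the Bernoulli measure $\lambda(\omega_0=\pm1)=\frac12(1\pm\beta)$, takes the locally constant observable $V(y,\pm1)=\pm d_\alpha^{-\gamma}\tau$, computes the tail constants $c_{1,2}=\frac12(1\pm\beta)g_\alpha^{-1}$ and the centering via Kac's lemma, and applies the Aaronson--Denker theorem. The step you flag as the ``main obstacle'' is in fact immediate in this framework: the skew product of the induced (Gibbs--Markov) Thaler map with a full Bernoulli shift is itself a Gibbs--Markov map with respect to the refined partition $\{\tY_j^{\pm}\}=\{(y,\omega):y\in Y_j,\ \omega_0=\pm1\}$, and $\omega_0\tau(y)$ is locally constant for that partition, so Theorem~\ref{thm:Igor} applies verbatim with no extension and no appeal to asymptotic independence of the sign-subsampled sequences.
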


Theorems~\ref{thm:stable1} and~\ref{thm:stable2} are proved in Section~\ref{sec:proofs}.

\subsection{Numerical results for stable laws}
\label{sec:stablenum}
We now illustrate that the algorithms described in Theorems~\ref{thm:stable1} and \ref{thm:stable2} are able to reliably construct $\alpha$-stable laws. The Thaler map $T$ is iterated for as many times as it takes to produce associated return times $\tau_0,\dots,\tau_{n-1}$ for some specified $n$.  
This data is then fed into
Theorems~\ref{thm:stable1} and \ref{thm:stable2}.
Note that the required number of iterates of $T$ is $\tau_0+\dots+\tau_{n-1}$ and
depends on the initial condition $y_0\in Y$, which is chosen randomly using the invariant density $h$ given by (\ref{eq:h}), restricted to $Y$.

In Figure~\ref{fig:Lawpdf}, we compare the results of our deterministic algorithm for approximating the probability density for $\alpha$-stable laws $X_{\alpha,\eta,\beta}$ with a direct numerical routine (we used the function \texttt{stblpdf} from the software package STABLE \cite{NolanStable}).  We take  $\alpha=1.6$, $\eta=0.5$ and $\beta=0$, $\beta=1$ and $\beta=-0.4$.  The two methods agree very well. The deterministically generated stable law was estimated from $50,000$ realisations (i.e.\ different initial conditions $y_0$) and we took $n=10,000$. To achieve data $\tau_0,\dots,\tau_{n-1}$ with the desired length $n=10,000$, the Thaler map was iterated for an average of $40,000$ times. The largest number of iterations needed for the realisations used here was more than $200,000$. 

Next we consider an example with $\alpha<1$. Figure~\ref{fig:Lawpdf2} shows the probability density for $\alpha$-stable laws with $\alpha=0.8$, $\eta=0.5$ and $\beta=0$, $\beta=1$ and $\beta=-0.4$. We used here $50,000$ realisations of data $\tau_0,\dots,\tau_{n-1}$ of length $n=10,000$ for $\beta=0$ and $\beta=-0.4$ and $n=50,000$ for $\beta=1$. Due to the higher probability to experience large jumps for $\alpha=0.8$ compared to $\alpha=1.6$, the number of iterations of the Thaler map needed to generate an induced time series of length $n$ is much larger. 
Here the Thaler map was iterated for an average of $2\times 10^6$ times for $\beta=0$ and $\beta=-0.4$ and for $10^7$ times for $\beta=1$. The largest number of iterations needed for the realisations used here was more than $140\times 10^6$ for $\beta=0$ and $\beta=-0.4$ and $230\times 10^6$ for $\beta=1$. 

\begin{figure}
	\centering
	\includegraphics[width=19pc]{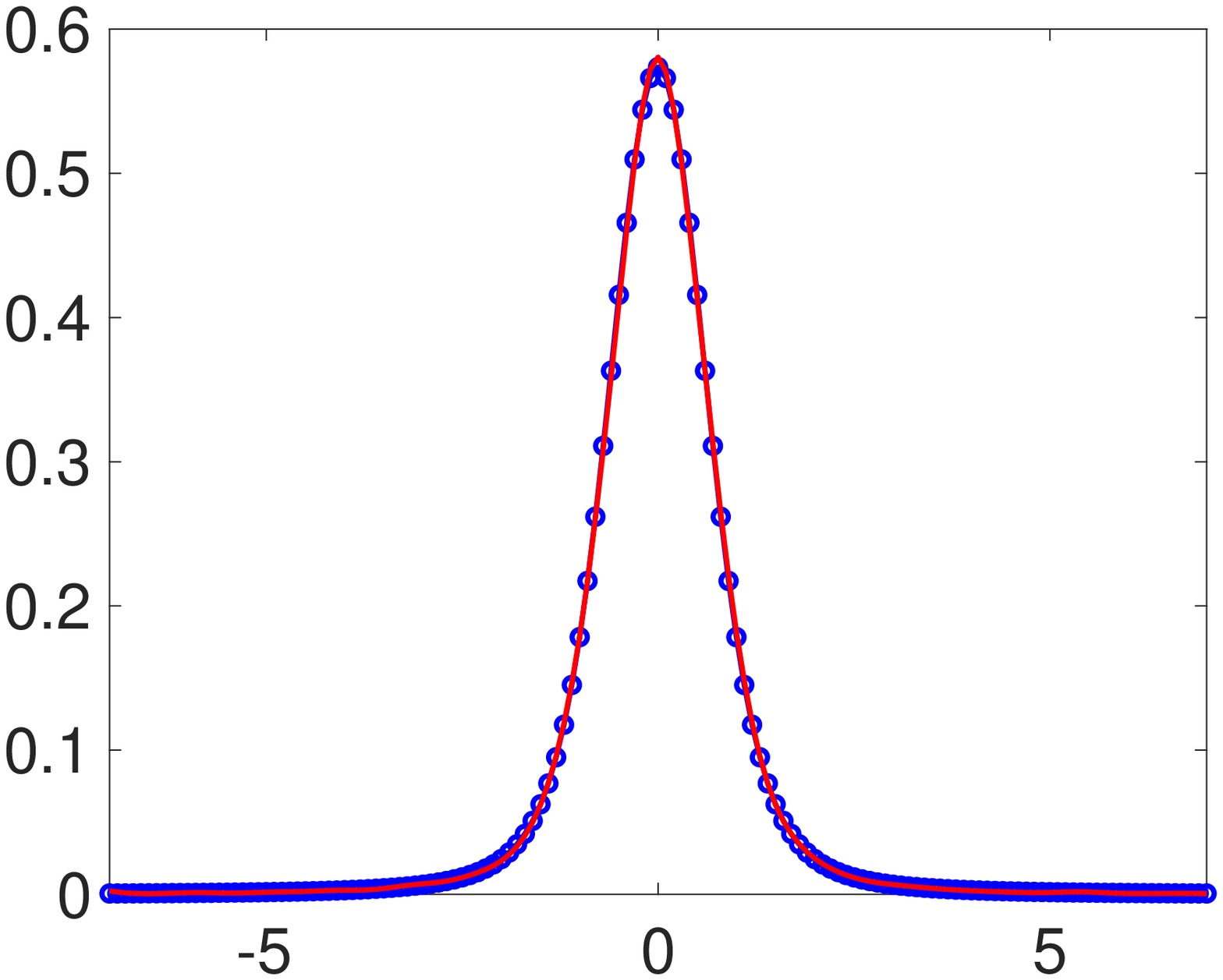}\\		
	\vspace{1mm}
	\includegraphics[width=19pc]{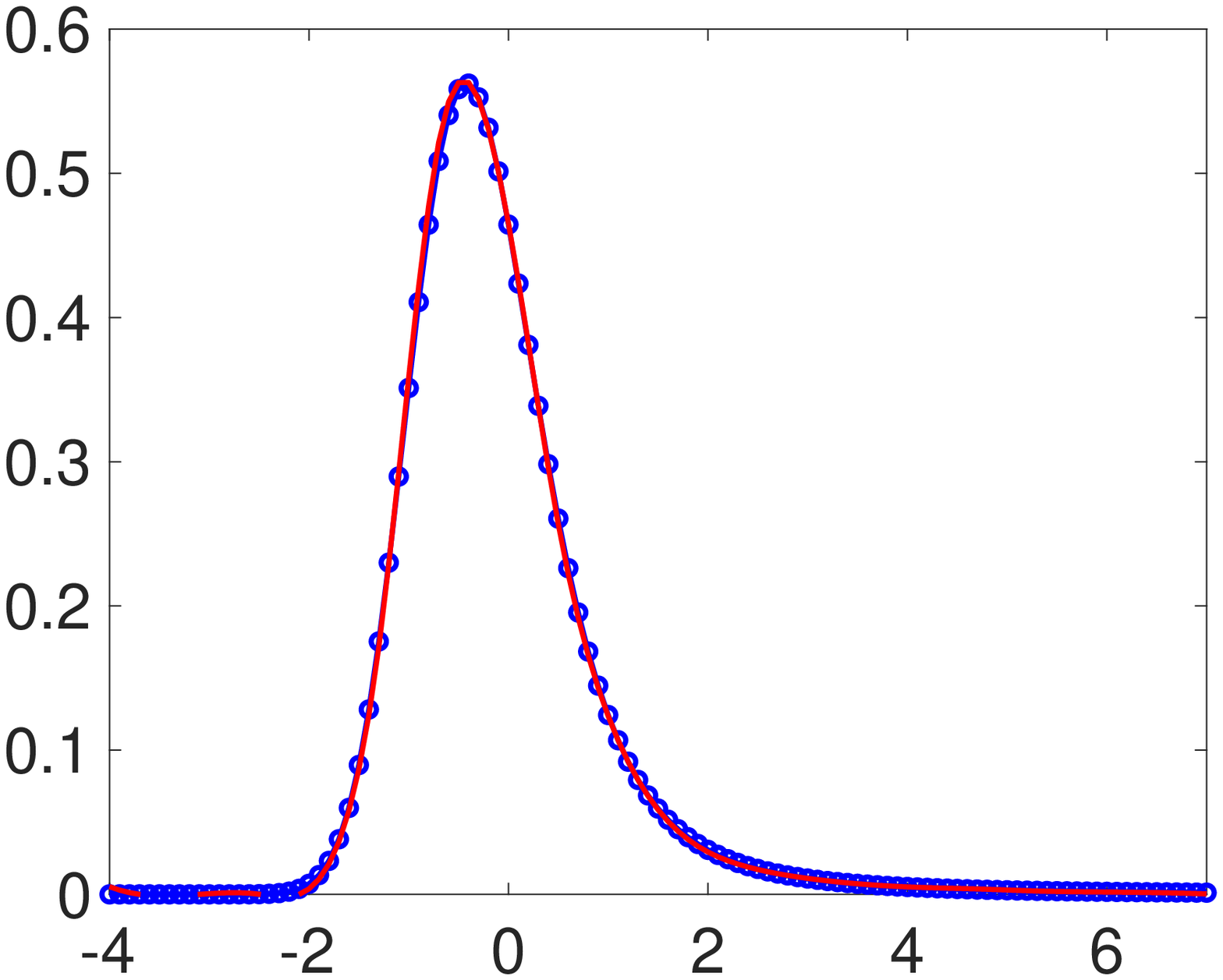}\\		
	\vspace{1mm}
	\includegraphics[width=19pc]{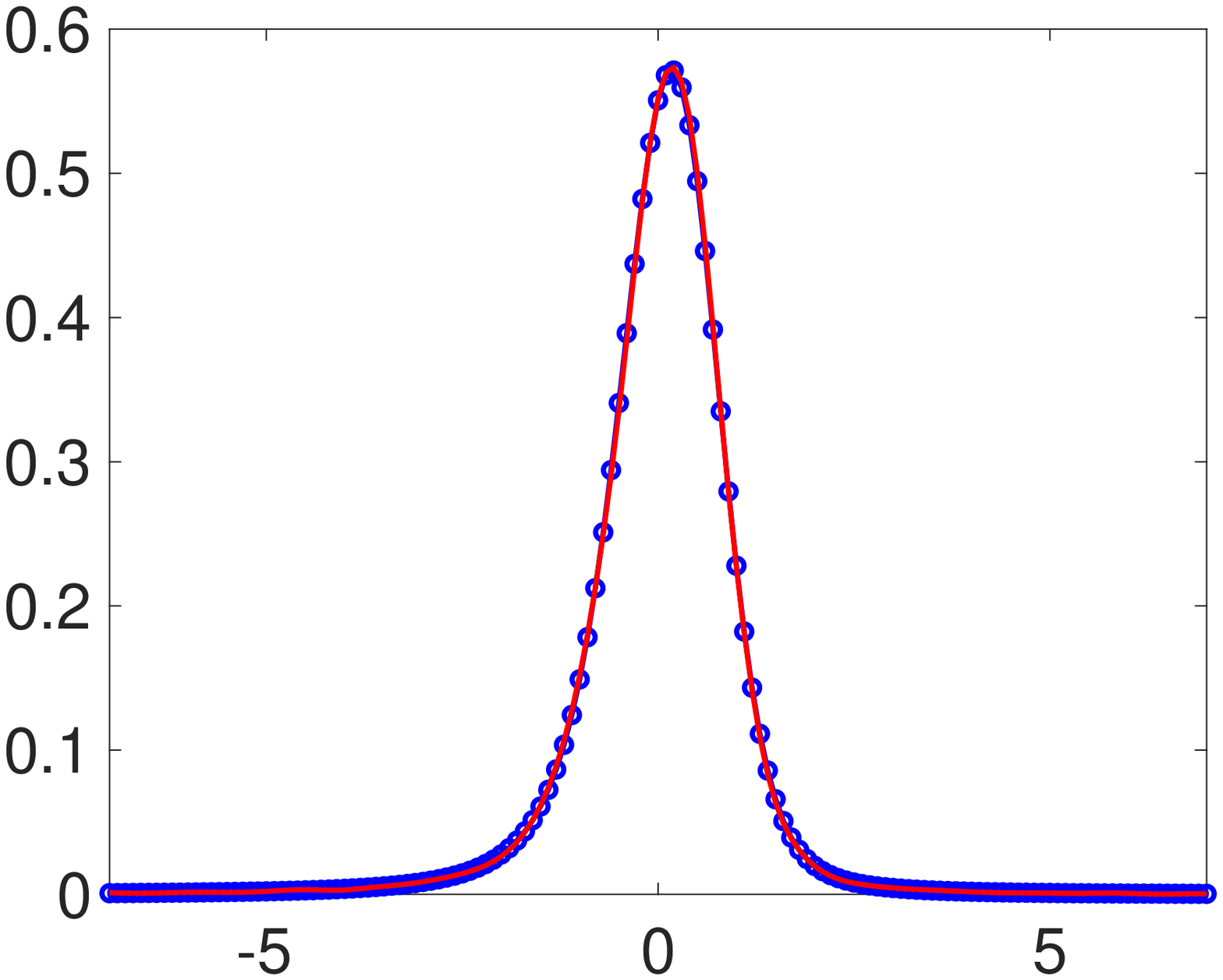}		
	\caption{Probability density functions for $\alpha$-stable laws $X_{\alpha,\eta,\beta}$ with $\alpha=1.6$, $\eta=0.5$ and (top): $\beta=0$, (middle): $\beta=1$ and (bottom): $\beta=-0.4$. The blue curve (open circles) uses the function \texttt{stblpdf} from the software package STABLE \cite{NolanStable}; the red continuous line shows the splined empirical histogram of the deterministic induced dynamics.}
	\label{fig:Lawpdf}
\end{figure}

\begin{figure}
	\centering
	\includegraphics[width=19pc]{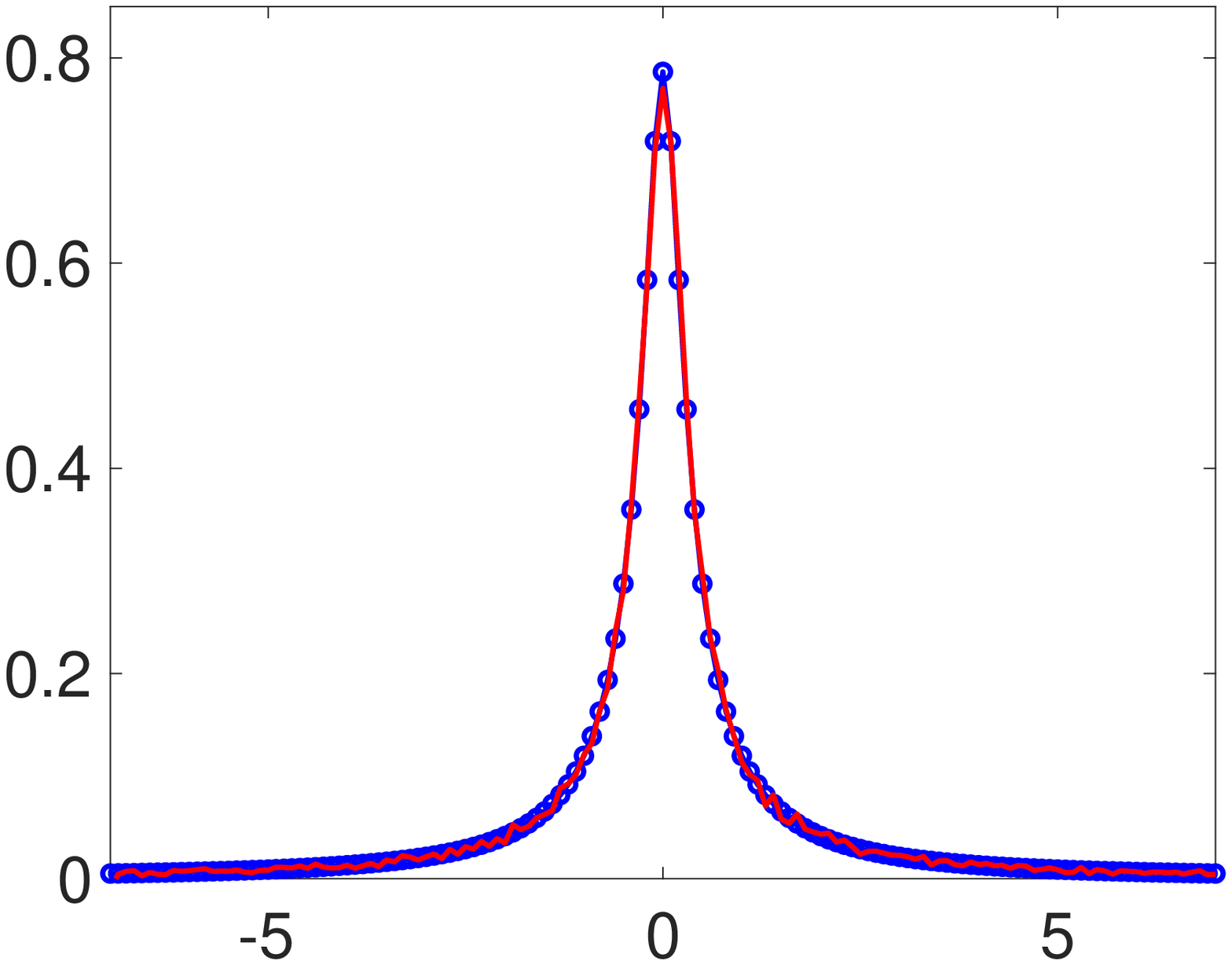}\\
	\vspace{1mm}
	\includegraphics[width=19pc]{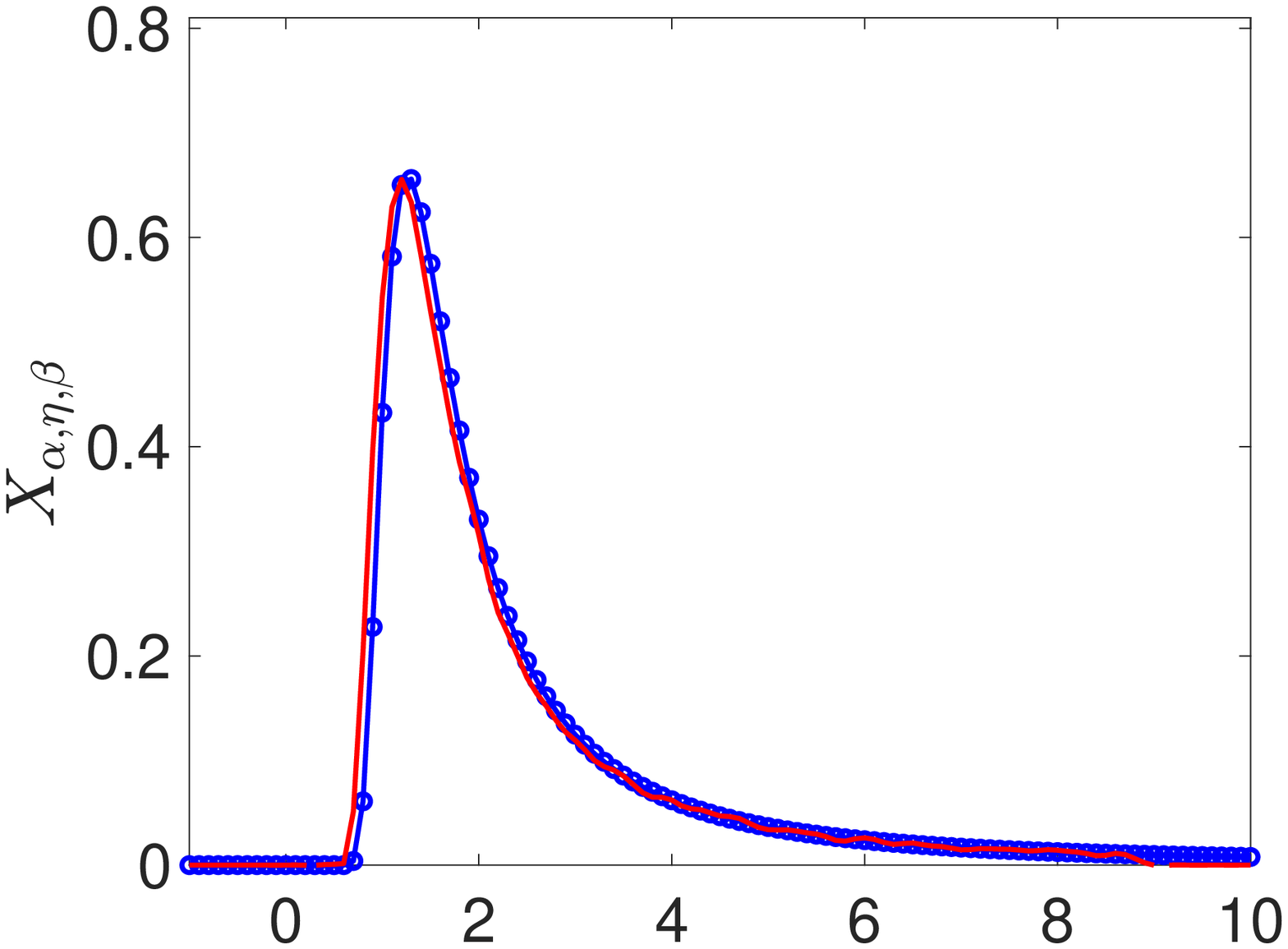}\\		
	\vspace{1mm}
	\includegraphics[width=19pc]{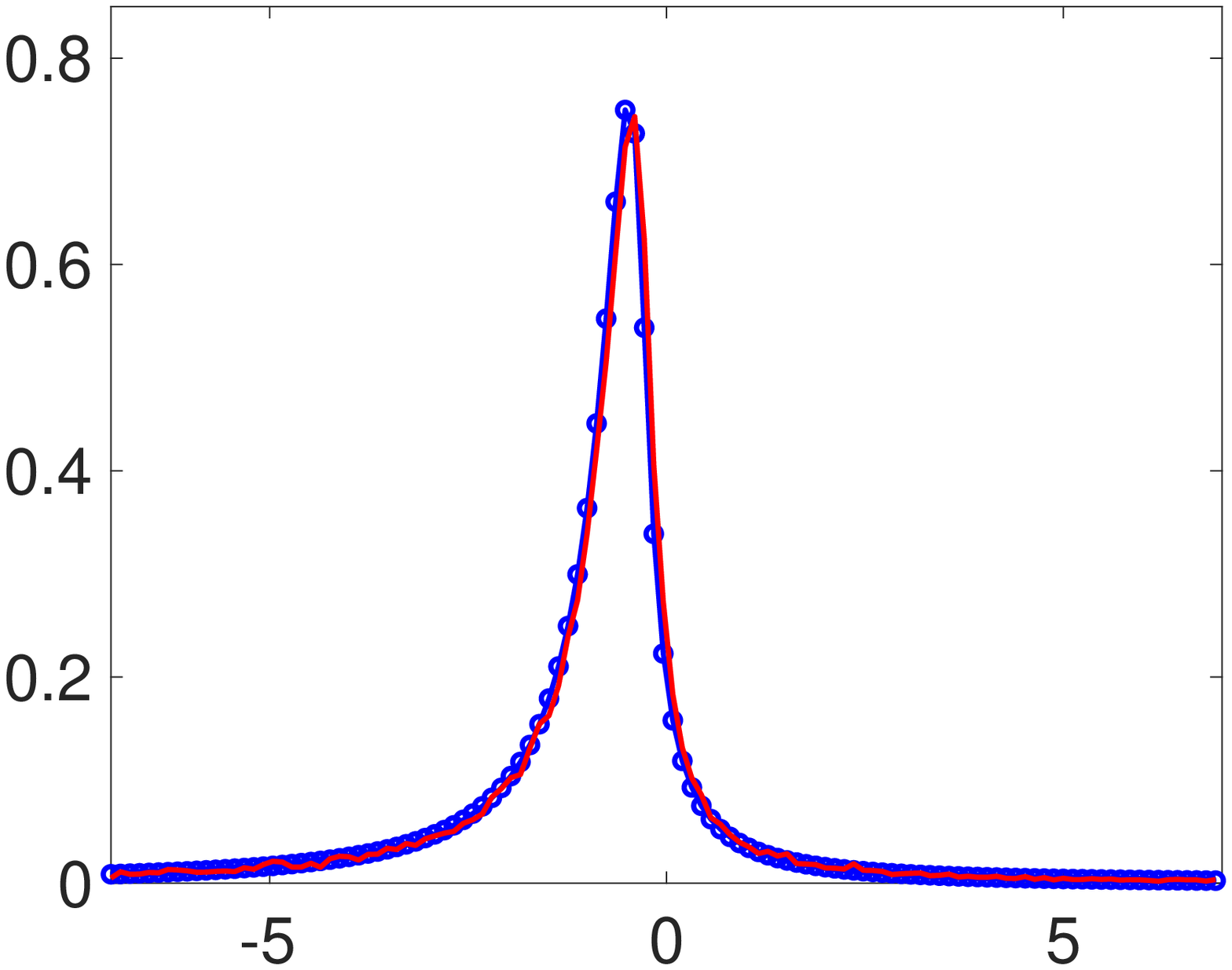}	 	
	\caption{Probability density functions for $\alpha$-stable laws $X_{\alpha,\eta,\beta}$ with $\alpha=0.8$, $\eta=0.5$ and (top): $\beta=0$, (middle): $\beta=1$ and (bottom): $\beta=-0.4$. The blue curve (open circles) uses the function \texttt{stblpdf} from the software package STABLE \cite{NolanStable}; the red continuous line shows the splined empirical histogram of the deterministic induced dynamics.}
	\label{fig:Lawpdf2}
\end{figure}

\begin{rmk} 
We expect that rigorous error rates can be obtained in Theorem~\ref{thm:stable1} and~\ref{thm:stable2} and that these rates will be poorest as $\alpha$ approaches $1$ and $2$ from below.
Indeed,  it is well-known even for sums of i.i.d.\ random variables that convergence rates to an $\alpha$-stable law are slow for $\alpha\in(1,2)$ close to $2$ and $\alpha\in(0,1)$ close to~$1$.
Indicative upper bounds on rates of convergence (ignoring logarithmic factors) for the distribution functions~\cite{Cramer63,Hall81} are 
$O(n^{-(2\alpha^{-1}-1)})$ for $\alpha\in(1,2)$ and
$O(n^{-(\alpha^{-1}-1)})+O(n^{-1})$ for $\alpha\in(0,1)$ with improvements for $\alpha<1$ if $\beta=0$.
Similar estimates for $\alpha\in(0,1)$ in a deterministic setting that is almost the same as the one here can be found in~\cite{Terhesiu14}.
Further work would be required to estimate the implied ``big O'' constant.
We do not address these issues further here.
\end{rmk}

\section{Generating $\alpha$-stable L\'evy processes}
\label{sec:Levy}

Given an $\alpha$-stable law 
$X_{\alpha,\eta,\beta}$, we define
the corresponding $\alpha$-stable L\'evy process to be the
c\`adl\`ag process $W_{\alpha,\eta,\beta}\in D[0,\infty)$ 
with independent stationary increments such that
$W_{\alpha,\eta,\beta}(t)=_d t^{1/\alpha}X_{\alpha,\eta,\beta}$.

The next result shows how to generate $\alpha$-stable L\'evy processes
$W_{\alpha,\eta,\beta}$ with
$\alpha\in(0,1)\cup(1,2)$, $\eta>0$, $\beta\in[-1,1]$.
For the proof, see Section~\ref{sec:proofs}.

\begin{thm}  \label{thm:Levy}
Assume the setup of Theorem~\ref{thm:stable2}.  Define
\[
W_n(t)=n^{-\gamma} d_\alpha^{-\gamma}\SMALL (\sum_{j=0}^{\lfloor nt\rfloor -1}\delta_j\tau_j-nt\beta \ell_\alpha),\quad t\ge0.
\]
Then $W_n$ converges weakly to $W_{\alpha,1,\beta}$ in
$D[0,\infty)$ as $n\to\infty$.

By Remark~\ref{rmk:eta} we can obtain all processes $W_{\alpha,\eta,\beta}=\eta W_{\alpha,1,\beta}$ in this way.    

In particular, taking $\delta_j\equiv\pm1$, we obtain processes $W_{\alpha,1,\pm 1}$ corresponding to the one-sided stable laws in 
Theorem~\ref{thm:stable1}. 
\end{thm}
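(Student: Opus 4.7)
The plan is to bootstrap the one-dimensional convergence in Theorem~\ref{thm:stable2} to a functional convergence in $D[0,\infty)$ equipped with Skorohod's $J_1$ topology, by verifying (a) convergence of finite-dimensional distributions and (b) tightness.

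For (a), fix $0 = t_0 < t_1 < \cdots < t_k$. The increments of $W_n$ over consecutive intervals are built from disjoint blocks of the sequence $(\delta_j \tau_j)$, with the $i$th block of size roughly $n(t_i-t_{i-1})$. The $\delta_j$ are i.i.d., so they contribute genuine independence across blocks. The $\tau_j$ are not independent, but they arise as consecutive return times for the first return map to $Y$ of the Thaler map, which is a full-branch piecewise smooth uniformly expanding Gibbs--Markov system with exponential decay of correlations. This ensures that blocks of $\tau_j$ separated in time decouple asymptotically. Applying Theorem~\ref{thm:stable2} block-by-block, each individual increment converges in distribution to $(t_i-t_{i-1})^{1/\alpha} X_{\alpha,1,\beta}$, and the asymptotic independence of blocks then identifies the joint limit as the finite-dimensional distributions of $W_{\alpha,1,\beta}$.

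For (b), because the summands have heavy tails of index $\alpha < 2$, tightness cannot hold in the uniform topology: isolated long excursions of the Thaler orbit near the neutral fixed point produce isolated jumps in the limit, so one must work with the $J_1$ topology. Here I would invoke the functional stable limit theorems for Gibbs--Markov systems developed by Tyran-Kami\'nska, Aaronson--Denker, and Melbourne--Zweim\"uller, which supply the required tightness under precisely the tail condition guaranteeing the one-dimensional convergence of Theorem~\ref{thm:stable2}. The i.i.d.\ signs $\delta_j$ are then incorporated by a continuous randomisation argument that passes through the Skorohod topology, producing the skewness $\beta$ and giving the full process-level limit $W_{\alpha,1,\beta}$.

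The main obstacle is the lack of independence of $\{\tau_j\}$: verifying tightness for a stable functional limit theorem with dependent summands is considerably more delicate than the classical i.i.d.\ case, and is what forces one to use the dynamical systems machinery for Gibbs--Markov maps rather than a direct Aldous-type criterion. Once this infrastructure is in place, the theorem follows in essentially the same way that Donsker's theorem extends the classical central limit theorem to Brownian motion.
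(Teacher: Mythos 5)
Your proposal lands on the right external input --- Tyran-Kami\'nska's functional stable limit theorem for Gibbs--Markov maps (Theorem~\ref{thm:TK} in the paper) --- but it is structured differently from the paper and carries more scaffolding than needed. The paper's proof is essentially one line: the proof of Theorem~\ref{thm:stable2} already encodes the signs by passing to the skew product $\tF(y,\omega)=(Fy,\sigma\omega)$ over the Bernoulli shift on $\{\pm1\}^{\N}$, so that $\delta_j\tau_j$ becomes $d_\alpha^{\gamma}\,V\circ\tF^j$ for a single locally constant observable $V(y,\pm1)=\pm d_\alpha^{-\gamma}\tau(y)$ of a single Gibbs--Markov map, with the tails $\tilde\mu(V>x)\sim c_1x^{-\alpha}$ and $\tilde\mu(V<-x)\sim c_2x^{-\alpha}$ already verified there; Theorem~\ref{thm:TK} then delivers weak convergence in the $\cJ_1$ topology (hence $\cM_1$) directly, with no separate finite-dimensional or tightness step. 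Two soft spots in your version are worth flagging. First, your finite-dimensional argument via ``exponential decay of correlations'' of the induced map does not stand on its own: the observable $\tau$ is unbounded with $\E_{\mu_Y}(\tau^2)=\infty$, so correlation decay for it is not even well defined in the usual sense, and the genuine asymptotic decoupling of blocks for heavy-tailed sums is part of what Tyran-Kami\'nska's theorem (via the Aaronson--Denker Nagaev-type transfer operator perturbation) actually proves; since you invoke that theorem anyway and it yields the full weak convergence, the block argument is redundant and, as sketched, would not survive scrutiny on its own. Second, the ``continuous randomisation argument'' for the signs $\delta_j$ is vague; the clean mechanism is the skew product above, which turns the signed sequence into a locally constant observable of one Gibbs--Markov system with the two-sided tail condition, so that no post hoc randomisation through the Skorohod topology is required.
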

As in Section~\ref{sec:stablegen}, weak convergence is understood with respect to the probability $\mu_Y$.
Convergence holds in the Skorohod $\cM_1$ topology on $D[0,\infty)$~\cite{Skorohod56,Whitt}.

Figure~\ref{fig:Levy} shows sample trajectories of L\'evy processes for $\alpha=1.6$, $\eta=0.5$ and various values of $\beta$ using the induced deterministic dynamics.

\begin{figure}
	\centering
	\includegraphics[width=19pc]{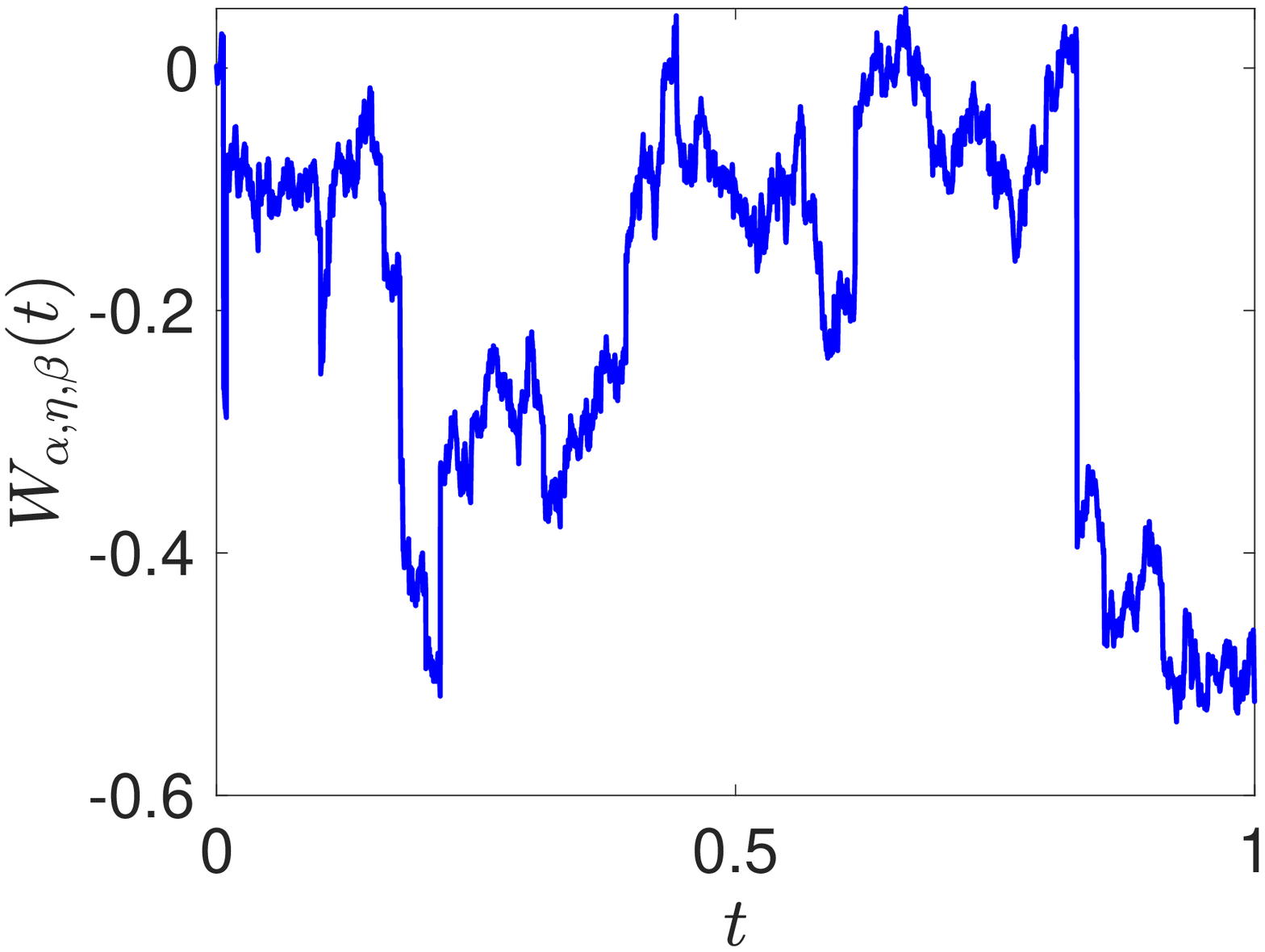}\\		
	\vspace{1mm}
	\includegraphics[width=19pc]{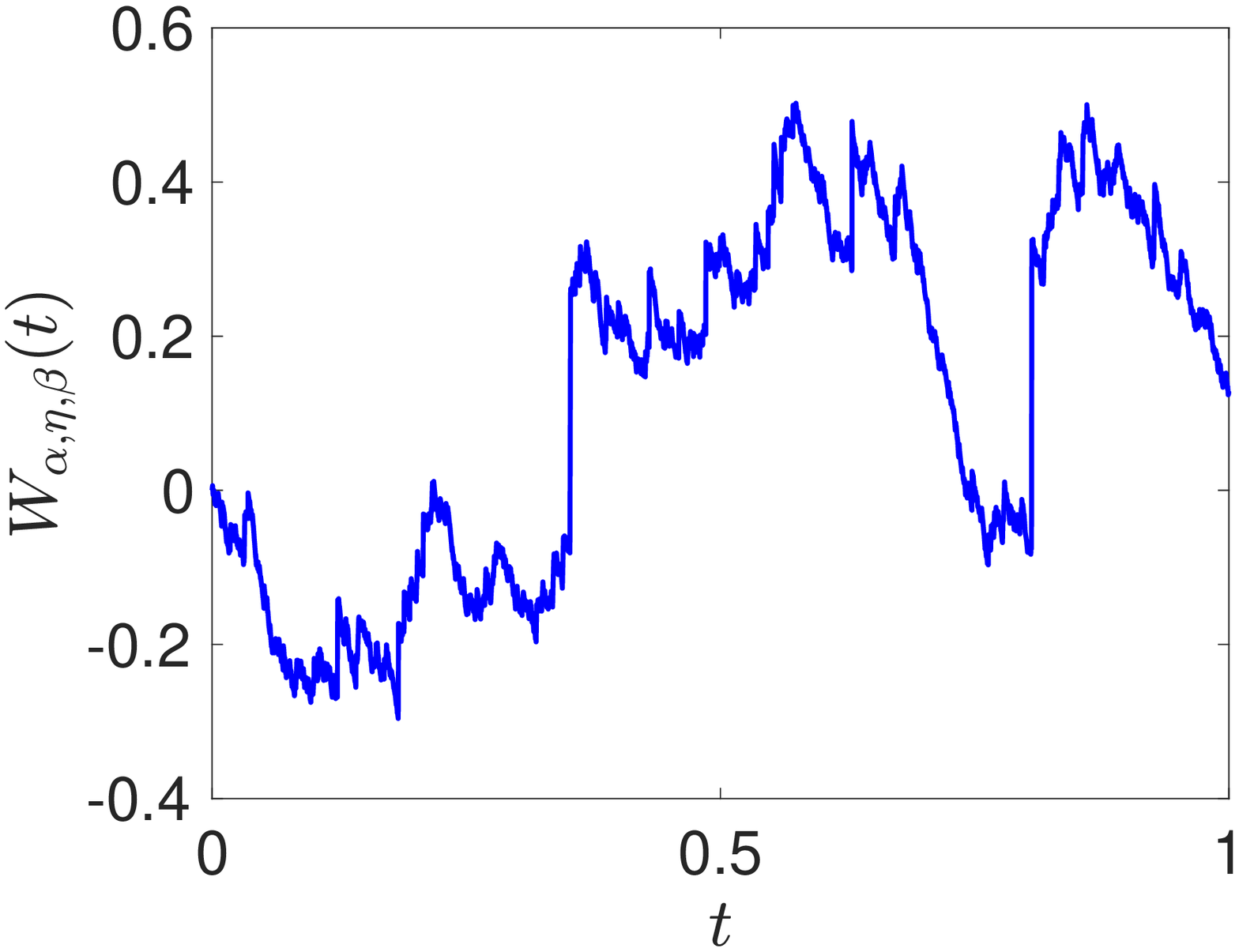}\\		
	\vspace{1mm}
	\includegraphics[width=19pc]{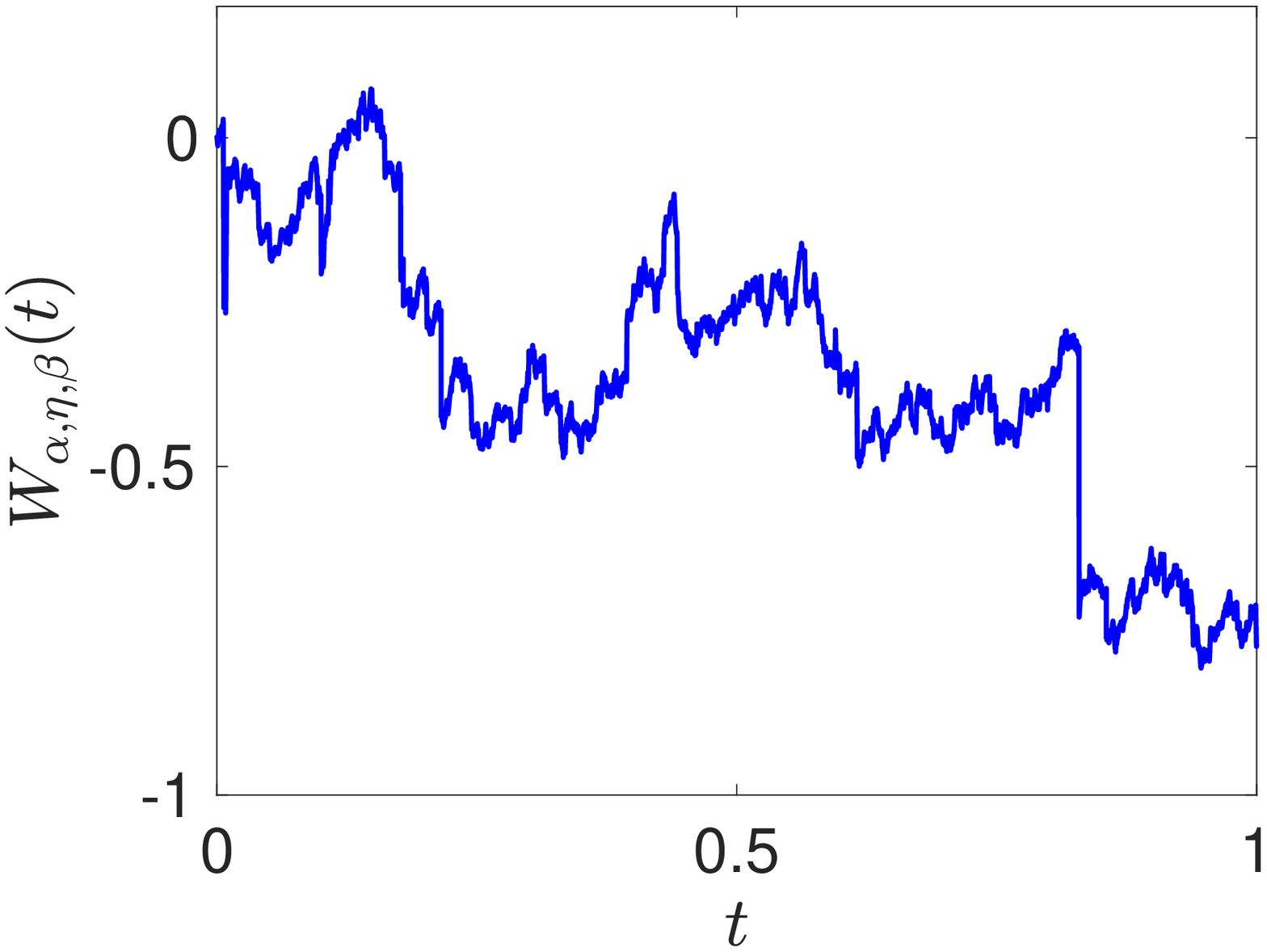}		
	\caption{Sample paths of  L\'evy processes $W_{\alpha,\eta,\beta}$ with $\alpha=1.6$, $\eta=0.5$ and (top): $\beta=0$, (middle): $\beta=1$ and (bottom): $\beta=-0.4$.}
	\label{fig:Levy}
\end{figure}

%%%%%%%%%%%%%%%%%%%%%%%%%%%%%%%%%%%%%%%%%%%%%%%%%%

\section{Numerical integration of SDEs using homogenisation}
\label{sec:SDE}

In this section we show how to simulate Marcus SDEs of the form (\ref{eq:SDE0}) with non-Lipschitz drift and diffusion terms driven by multiplicative L\'evy noise. 

The case of ``exact'' multiplicative noise where $m=d$ and $b=(Dg)^{-1}$ for some suitable function $g:\R^d\to\R^d$ was studied in~\cite{GottwaldMelbourne13c}.  In this case, the change of coordinates $\tZ=g(Z)$ leads to an SDE in terms of $\tZ$ with constant diffusion term.  In principle, $\tZ$ can now be computed by existing methods~\cite{HighamEtAl02,MilsteinTretyakov05,HutzenthalerEtAl12,Sabanis13,TretyakovZhang13,DareiotisEtAl16,Mao16,KumarSabanis17,KellyLord16} and then $Z$ is recovered via the formula $Z=g^{-1}(\tZ)$.

For $d\ge2$, exactness is a very restrictive condition.  Even for $d=1$, the method above is not useful when $b$ vanishes as in the examples below.  Hence our aim is to devise a numerical method that does not rely on exactness.

Our method in this section uses the full Thaler map $T:[0,1]\to[0,1]$ for which the density $h$ in~\eqref{eq:h} defines a finite measure only for $\gamma<1$.
Theorem~\ref{thm:SDE} below does not hold in the infinite measure setting and hence fails for $\gamma>1$.
Hence in this section we restrict to the range $\alpha\in(1,2)$.
(In contrast, our methods in Sections~\ref{sec:stable} and~\ref{sec:Levy} involve returns to the set $Y=[x^*,1]$ on which $h$ restricts to a finite measure for all $\gamma\in[0,1)\cup(1,\infty)$.)

Throughout this section we work with the invariant probability measure $\mu$ corresponding to the normalised density
\begin{equation}
 \label{eq:h2}
 \tilde h(x)=\frac{1-\gamma}{2^{1-\gamma}}(x^{-\gamma}+(x+1)^{-\gamma}).
\end{equation}

\subsection{Numerical algorithm for solving SDEs}

In this paper, we focus on solving SDEs of the type~\eqref{eq:SDE0} in the scalar case $d=m=1$.  The theoretical basis~\cite{ChevyrevEtAl19} behind the method applies in general dimensions.  However, in practice one would need to consider Thaler-type maps with multiple fixed points and to construct higher-dimensional processes $W_n\in D([0,\infty),\R^m)$ converging to the appropriate driving L\'evy process as in Section~\ref{sec:Levy}.
Since these preliminary steps have been carried out so far only in the scalar case, we restrict to that case here.

Consider the SDE~\eqref{eq:SDE0} with $d=m=1$ and $W=W_{\alpha,\eta,\beta}$
where $\alpha\in(1,2)$, $\eta>0$, $\beta\in[-1,1]$. 
Let $T$ be the Thaler map~\eqref{eq:Thaler} with $\gamma=\alpha^{-1}$.
We define a sequence of observables $v^{(n)}=\chi^{(n)}\,v\circ T^n$ where $v:[0,1]\to\R$ is the  mean zero observable given by
\[
v(x)= \eta d_\alpha^{-\gamma}(1-2^{\gamma-1})^{-\gamma} \tilde v(x), \qquad
\tilde v(x)=
\begin{cases} 
\phantom{Yy} 1 & x\le x^\star
\\ 
(1-2^{1-\gamma})^{-1} 
& x>x^\star
\end{cases},
\]
and
\[
\chi^{(n)}=\chi_{n-1}\cdots\chi_0\in\{\pm1\},
\qquad 
\chi_j=\begin{cases} 
\,1 & T^jx\le x^\star
\\ 
\BIG \delta_j
& T^jx>x^\star
\end{cases}.
\]
Here, $d_\alpha$ is as in \eqref{eq:da} and $\delta_0,\delta_1,\dots$ are independent copies of the random variable $\delta$ where $\PP(\delta=\pm1)=\frac12(1\pm\beta)$ as in Section~\ref{sec:stablegen}.
(In particular, the random variable $\chi^{(n)}$ gets updated only when the trajectory visits $Y$ and is unchanged during the laminar phase in $[0,x^*]$).

We can now state our main result (see Section~\ref{sec:proofs} for the proof).
\begin{thm}  \label{thm:SDE}
Let $a:\R\to\R$ be $C^{1+\delta}$ and $b:\R\to\R$ be $C^{\alpha+\delta}$ for some $\delta>0$.
Fix $\xi\in\R$.
Let
\begin{align} \label{eq:fs}
z_{n+1}^{(\eps)}=z_n^{(\eps)}+\eps a(z_n^{(\eps)})+\eps^\gamma b(z_n^{(\eps)})v^{(n)}, \quad z_0^{(\eps)}=\xi,
\end{align}
where $v^{(n)}:[0,1]\to\R$ is as defined above,
and set $\hat z_\eps(t)=z^{(\eps)}_{\lfloor t\eps^{-1}\rfloor }$.
Then $\hat z_\eps$ converges weakly to $Z$ in $D[0,\infty)$ on the probability space
$([0,1],\mu)$ as $\eps\to0$ where
$Z$ is the solution to the Marcus SDE~\eqref{eq:SDE0} with $Z(0)=\xi$.
\end{thm}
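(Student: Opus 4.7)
The plan is to apply the general weak convergence theorem for discrete fast-slow systems driven by Thaler-type intermittent maps proved in \cite{ChevyrevEtAl19}. That theorem asserts that if $T$ is a nonuniformly expanding map preserving an ergodic probability measure $\mu$ and if the rescaled partial sums of a sufficiently regular centred observable converge weakly in the Skorohod $\cM_1$ topology on $D[0,\infty)$ to an $\alpha$-stable L\'evy process $W$, then for $a\in C^{1+\delta}$ and $b\in C^{\alpha+\delta}$ the interpolated slow variables converge weakly to the unique solution of the Marcus SDE $dZ=a(Z)\,dt+b(Z)\diamond dW$. Hence Theorem~\ref{thm:SDE} reduces to showing that the noise process
\[
W_\eps(t):=\eps^\gamma\SMALL\sum_{k=0}^{\lfloor t\eps^{-1}\rfloor-1} v^{(k)}
\]
converges weakly in $\cM_1$ to $W_{\alpha,\eta,\beta}$ on the appropriate probability space.

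To accommodate the extra randomness carried by the signs $\delta_j$, I would first pass to the skew product $\tilde T(x,\omega)=(Tx,S\omega)$ on $[0,1]\times\{\pm1\}^\N$, with $S$ the Bernoulli shift, so that $v^{(n)}$ becomes a standard observable composed with $\tilde T^n$ on the ergodic system $(\tilde T,\mu\otimes\PP_\delta)$; the nonuniform expansion hypotheses lift from $T$ to $\tilde T$. The convergence of $W_\eps$ is then an extension of Theorem~\ref{thm:Levy} from the induced space $(Y,\mu_Y)$ to the full space $([0,1],\mu)$. Partitioning the first $N=\lfloor t\eps^{-1}\rfloor$ iterates into excursions between successive visits to $Y$, one observes that $\chi^{(n)}$ is constant throughout each excursion, with value $\Sigma_k$ equal to the cumulative product of the $\delta_j$'s over previous visits; the partial sum of $v$ over the $k$-th excursion of length $\tau_k$ then equals $\Sigma_k[(\tau_k-1)c_1+c_2]$, where $c_1,c_2$ are the two values taken by $v$ on $[0,x^\star]$ and $(x^\star,1]$ respectively. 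The prefactor $\eta d_\alpha^{-\gamma}(1-2^{\gamma-1})^{-\gamma}$ in the definition of $v$ is calibrated precisely so that, after applying Kac's formula $\E_\mu\tau=\mu(Y)^{-1}=\ell_\alpha$ to change from step index $N$ to visit index $M\sim N/\ell_\alpha$, the resulting expression matches the one analysed in Theorem~\ref{thm:Levy}, producing the desired weak limit $\eta W_{\alpha,1,\beta}=W_{\alpha,\eta,\beta}$.

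The main technical obstacle is upgrading convergence of $W_\eps$ from finite-dimensional distributions to the Skorohod $\cM_1$ topology on $D[0,\infty)$. A single very long laminar excursion contributes a slowly accumulating, near-monotone increment to $W_\eps$ that must coalesce in the limit into one large jump of $W_{\alpha,\eta,\beta}$. The reason $\cM_1$ (and not the stronger $J_1$) convergence is available is exactly that the monotone structure of the Thaler laminar branch, combined with the constancy of $\chi^{(n)}$ on each excursion, forces $W_\eps$ itself to be monotone throughout each excursion up to a bounded $O(\eps^\gamma)$ boundary correction carrying the value $c_2$. Once this $\cM_1$ convergence of $W_\eps$ is in hand, the continuity theorem of \cite{ChevyrevEtAl19} for the map from driving path to Marcus solution delivers the Marcus SDE \eqref{eq:SDE0} as the weak limit of $\hat z_\eps$, completing the proof.
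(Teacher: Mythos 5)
Your overall architecture matches the paper's: reduce to the deterministic homogenisation theorem of \cite{ChevyrevEtAl19}, lift the signs $\delta_j$ via a Bernoulli skew product, and obtain convergence of the driving noise on the full space $([0,1],\mu)$ by inducing on $Y$ (the paper does this by combining Theorem~\ref{thm:Igor} and Theorem~\ref{thm:TK} on the induced Gibbs--Markov system with \cite[Theorem~2.2]{MZ15}, rather than by the hands-on excursion decomposition you sketch, but your calibration of the prefactor $\eta d_\alpha^{-\gamma}(1-2^{\gamma-1})^{-\gamma}$ and the use of Kac's formula are exactly the right computations).

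The genuine gap is in your final step. There is no ``continuity theorem for the map from driving path to Marcus solution'' that takes only $\cM_1$ convergence of the driver as input: the It\^o/Marcus solution map is \emph{not} continuous with respect to the Skorohod topologies on the driving path, which is precisely why \cite[Theorem~2.6]{ChevyrevEtAl19} has a \emph{second} hypothesis beyond weak convergence of $W_\eps$, namely tightness in $p$-variation of the noise process for suitable $p>\alpha$. Your proposal never addresses this, so the argument does not close: with only $\cM_1$ convergence of $W_\eps$ one cannot conclude anything about $\hat z_\eps$. The paper verifies the $p$-variation hypothesis by observing that the induced map $\tF$ is Gibbs--Markov and that the return time satisfies $\mu_Y(\tau>n)\sim{\rm const.}\,n^{-\alpha}$ (Proposition~\ref{prop:tau}), i.e.\ $\tau$ is regularly varying with index $\alpha$, so that \cite[Theorem~6.2]{ChevyrevEtAl19} applies. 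Relatedly, the limit topology for $\hat z_\eps$ in Theorem~\ref{thm:SDE} is in fact weaker than $\cM_1$; the claim that $\cM_1$ convergence of the driver transfers to the solution is not what is being proved. You would need to add the $p$-variation tightness step (or an equivalent rough-path-type control on $W_\eps$) for the proof to be complete.
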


\begin{rmk}
We refer to equation~\eqref{eq:fs} as a fast-slow map.
Indeed, in the case $\beta=1$ ($\delta_n\equiv1$) Theorem~\ref{thm:SDE} ensures that solutions $z_n^{(\eps)}$ of the fast-slow system
\begin{align*}
z_{n+1}^{(\eps)} & =z_n^{(\eps)}+\eps a(z_n^{(\eps)})+\eps^\gamma b(z_n^{(\eps)})v(x_n), \quad z_0^{(\eps)}=\xi, \\
x_{n+1} & =Tx_n
\end{align*}
converge weakly to solutions of the SDE~\eqref{eq:SDE0} on the slow time scale, i.e.\ $\hat z_{\eps}=z^{(\eps)}_{\lfloor \cdot\,  \eps^{-1} \rfloor}\to_w Z$ as $\eps\to 0$.
In the general case $\beta\in[-1,1]$, there is a similar but more complicated interpretation that is used in the proof of Theorem~\ref{thm:SDE} in Section~\ref{sec:proof}.
\end{rmk}

\begin{rmk}  
The topology used for the weak convergence in Theorem~\ref{thm:SDE} is too technical to define here and we refer to \cite{ChevyrevEtAl19}.   It is weaker than the $\cM_1$ topology, but sufficiently strong to guarantee convergence in the sense of joint distributions.
That is, $(\hat z_\eps(t_1),\dots,\hat z_\eps(t_k))$ converges in distribution
to $(Z(t_1),\dots,Z(t_k))$ in $\R^k$ as $\eps\to0$ for all $t_1,\dots,t_k\in[0,1]$, $k\ge1$.
\end{rmk}

\begin{rmk}  \label{rmk:leb}
By results of~\cite{Eagleson76,Zweimuller07} (see in particular~\cite[Example~1.1]{CFKMZsub}), the initial conditions $x_0\in[0,1]$ can be equally well (from the theoretical point of view of Theorem~\ref{thm:SDE}) chosen using the invariant probability measure $\mu$ or the uniform Lebesgue measure.
We have checked numerically in the case $a\equiv0$, $b\equiv1$ (corresponding to generation of a L\'evy process $Z = W_{\alpha,\beta,\eta}$) that convergence of the probability density at $t=1$ is faster if the initial conditions are drawn using $\mu$.

Hence throughout this section, when applying the fast-slow map~\eqref{eq:fs}, we work with initial conditions $x_0$ drawn using the invariant probability measure $\mu$.  The explicit formula for the density $\tilde h$ in~\eqref{eq:h2} is less helpful here due to the singular behaviour near $x=0$.  To circumvent this, we propagate
uniformly distributed initial conditions $x_0'\in[0,1]$  under $10,000$ iterations of the Thaler map and then work with the initial conditions $x_0=T^{10,000}x_0'$.
\end{rmk}

\subsection{Numerical results for solving SDEs}
\label{sec:numerics}
To illustrate our method, we consider the dynamics of a particle in a double-well potential $V$ driven by a L\'evy process
\begin{align}
dZ=-\nabla V(Z)\, dt + b(Z)\diamond dW_{\alpha,\eta,\beta} 
\label{eq:SDE_LMW}
\end{align}
with  drift term $a= -\nabla V$. We consider two specific examples with non-Lipschitz drift and diffusion terms. 
In the first example, our approach is in good agreement with conventional methods.  The second example possesses a natural boundary which seems better treated by the deterministic method presented in this paper. \\
%The first one allows for a simple reformulation into an SDE with additive noise whereas the second example cannot be easily cast into a form amenable for standard numerical techniques. \\

{\em{Example 1}}: Consider the SDE~\eqref{eq:SDE_LMW} with potential and diffusion terms
\[
V(Z)=A[(Z-a_0)^2/b_0^2-1]^2 \qquad\text{and} \qquad 
b(Z)= s \sqrt{1-(Z/B)^2}.
\]
 This example was considered in \cite{LiEtAl13} where the stochastic forcing was a compound Poisson process. Note that both the drift and diffusion terms are non-Lipschitz. We use the parameters $A=20$, $a_0=400$, $b_0=2$, $B=500$ from~\cite{LiEtAl13}, and take $s=10$ for the strength of the diffusion. 
We take $\alpha=1.5$, $\eta=0.5$, $\beta=0$  for the driving L\'evy process
$W_{\alpha,\eta,\beta}$.

Theorem~\ref{thm:SDE} implies in particular convergence in distribution of $\hat z_\eps(t)$ to the stochastic process $Z(t)$ at fixed $t$.
We test this numerically by generating the probability density function of $Z(1)$ via (i) existing methods based on Euler-Maruyama discretisation and (ii) our theorem.  The results are shown in Figure~\ref{fig:SDE2_t1}. \\

First we describe method~(i).
The non-Lipschitz diffusive term $b$ can be removed by the change of coordinates $\tZ=g(Z)=B\arcsin\tfrac{Z}{B}$.  The transformed SDE is
\begin{align}
d\tZ=\tilde a(\tZ)\,dt + s\, dW_{\alpha,\eta,\beta},
\label{eq:SDE_LMWt}
\end{align}
where the transformed drift term 
\[
\tilde a(\tZ)=-\frac{4A}{b_0^4}\frac{1}{|\cos\frac{\tZ}{B}|}(B\sin\tfrac{\tZ}{B}-a_0)((B\sin\tfrac{\tZ}{B}-a_0)^2-b_0^2)
\]
now has a singularity at $\tZ=\pm \frac{\pi}{2}B$ corresponding to $Z=\pm B$. For the parameter values above, it turns out that the singularity lies outside the range where the probability density function is significantly different from zero and is relatively harmless. 
The transformed SDE (\ref{eq:SDE_LMWt}) for $\tZ$ can now be solved with an Euler-Maruyama type
scheme with time step $\Delta t$. To account for the non-Lipschitz drift term $\tilde a$, we apply the taming method \cite{HutzenthalerEtAl12,Sabanis13}, and discretise according to
\[
\tZ_{n+1}=\tZ_n + \frac{\tilde a(\tZ_n)}{1+|\tilde a(\tZ_n)|\Delta t}    \Delta t + s\, \Delta W_{\alpha,\eta,\beta},
\]
where $\Delta W_{\alpha,\eta,\beta} =_d (\Delta t)^\gamma X_{\alpha,\eta,\beta}$.  Finally, we transform back to recover the solution $Z=g^{-1}(\tZ)=B\sin\tfrac{\tZ}{B}$ to the original SDE~\eqref{eq:SDE_LMW}. 
In Figure~\ref{fig:SDE2_t1}, we applied the Euler-Maruyama method with
time step $\Delta t=0.0001$ averaged over $500,000$ realisations of the driving L\'evy noise, starting from an initial condition $Z(0)=\xi=410$. 

Method~(ii) consists of applying 
Theorem~\ref{thm:SDE} directly to the non-transformed SDE.
Figure~\ref{fig:SDE2_t1} shows the empirical distribution of $\hat z_\eps(1)$  averaged again over $500,000$ realisations $x_0=T^{10,000}x_0'$ (as explained in Remark~\ref{rmk:leb}) for various values of $\epsilon$ with initial condition $Z(0)=z_0^{(\eps)}=\xi=410$. The convergence of the probability density function obtained by iterating the fast-slow map (\ref{eq:fs}) and Theorem~\ref{thm:SDE} is clearly seen. \\ 

{\em{Example 2}}: 
Consider now the SDE (\ref{eq:SDE_LMW}) with potential and diffusion terms
\[
V(Z)=\tfrac{1}{2}Z^2-\tfrac{1}{4}Z^4 \qquad\text{and} \qquad 
b(Z)= -Z^2.
\]
We take $\alpha=1.5$, $\eta=0.5$, $\beta=0.5$  for the driving L\'evy process $W_{\alpha,\eta,\beta}$. There is a natural boundary at $Z=0$: for $Z(0)>0$ the stochastic process remains strictly positive for all times with probability $1$. This is readily seen by writing the SDE as $dZ=Zg_1(Z)\,dt+Zg_2(Z)\diamond dW$ where $g_1(Z)=1-Z^2$ and $g_2(Z)=-Z$.  Since the Marcus integral satisfies the standard laws of calculus, solutions $Z(t)$ satisfy $Z(t)=Z(0)\exp\{\int_0^t g_1(Z(s))\,ds+\int_0^t g_2(Z(s))\diamond dW(s)\}$. Hence the sign of the initial condition is preserved. 

Again, we compare the two methods~(i) Euler-Maruyama and~(ii) Theorem~\ref{thm:SDE}.   As shown below, Euler-Maruyama fails to deal adequately with the natural boundary at $Z=0$, whereas Theorem~\ref{thm:SDE} respects this boundary.\\

To apply Euler-Maruyama, we again start by removing the non-Lipschitz diffusion term via 
the change of coordinates $\tZ=g(Z)=Z^{-1}$. The transformed SDE is
\begin{align}
d\tZ=(\tZ^{-1}-\tZ)\,dt + dW_{\alpha,\eta,\beta}.
\label{eq:SDE1_trans}
\end{align}
When discretising the transformed SDE (\ref{eq:SDE1_trans}) using an Euler-Maruyama scheme, however, large increments $\Delta W_{\alpha,\beta,\eta}$ lead to spurious crossings of the natural boundary at $Z=0$. This does not occur for our deterministic method applying Theorem~\ref{thm:SDE} directly to the non-transformed SDE. We show in Figure~\ref{fig:SDE_t1} the probability density function of $Z(2)$ obtained  by considering the empirical distribution of $\hat z_\eps(2)$ for several values of $\eps$. We compute the latter by averaging over $500,000$ realisations for various values of $\epsilon$ with initial condition $Z(0)=z_0^{(\eps)}=\xi=0.2341$. The corresponding probability density function for an Euler-Maruyama discretisation with time step $\Delta t=0.0001$ is shown as well. Whereas the empirical density obtained from the fast-slow map converges to a unimodal probability density function, the probability density function obtained from the Euler-Maruyama discretisation exhibits significant leakage into the region $Z<0$.  \\

\begin{figure}
	\centering
	\includegraphics[width=25pc]{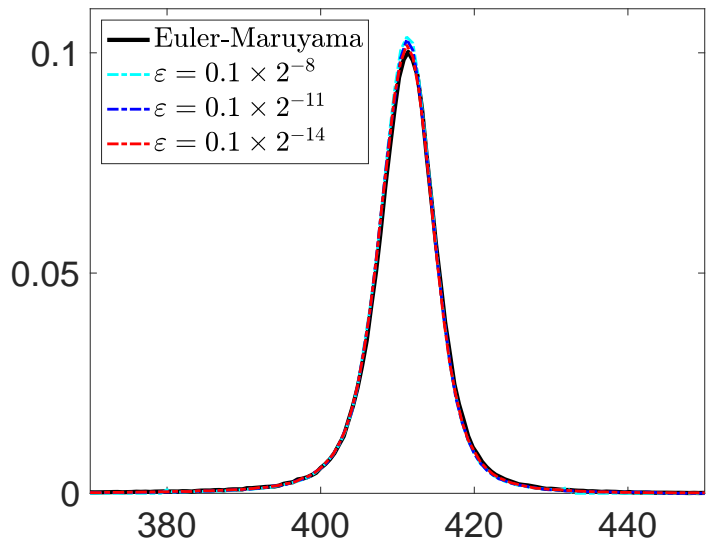}
	\caption{Probability density function for the solution to the SDE in Example~1 at fixed time $t=1$.  Results for the fast-slow map (\ref{eq:fs}) are shown for several values of~$\eps$ and are compared with Euler-Maruyama discretisation.}
	\label{fig:SDE2_t1}
\end{figure}

\begin{figure}
        \centering
        \includegraphics[width=25pc]{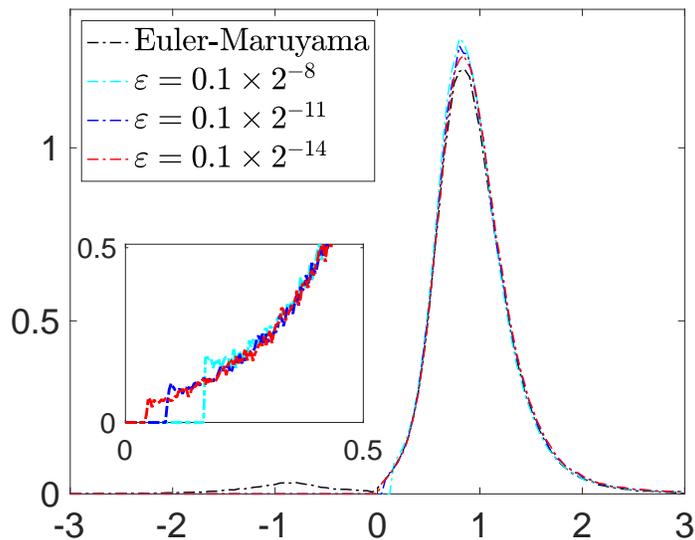}
        \caption{Probability density function for the solution to the SDE in Example~2 at fixed time $t=2$.  Results for the fast-slow map (\ref{eq:fs}) are shown for several values of~$\eps$  and are compared with Euler-Maruyama discretisation. The inset shows a zoom near the natural boundary at $Z=0$ for the probability density function obtained from the fast-slow map (\ref{eq:fs}).}
        \label{fig:SDE_t1}
\end{figure}

We end with a few comments on numerical issues when iterating the fast-slow map (\ref{eq:fs}). The smallness of $\epsilon$ requires long simulations as the convergence is on the slow time scale $n=\lfloor\eps^{-1}t\rfloor$.  As a result, the fast dynamics may get trapped on a spurious periodic orbit, caused by the discreteness of floating numbers. To avoid this, we occasionally add a normally distributed random number with mean zero and variance $10^{-20}$ (computed $\bmod 1$). This perturbation is added each time the fast orbit $x_n$ enters the hyperbolic region $[x^*,1]$ and has undergone at least $10^4$ iterations after the previous perturbation -- this ensures that the superdiffusive statistics are not altered by the addition of the small perturbation.

\subsection{Numerical results on the stationary density and the auto-correlation function}
\label{sec:numerics2}
Moving beyond the theoretical justification provided by Theorem~\ref{thm:SDE}, in this subsection  we show that our method is furthermore able to provide a good approximation for the stationary density as estimated from large $t$  simulations as well as capturing temporal statistics. 

Figure~\ref{fig:SDE2} shows the stationary density for the SDE in
Example~1.
Again we compare (i)  Euler-Maruyama discretisation and (ii) Theorem~\ref{thm:SDE}.
For Euler-Maruyama, we take
 $\Delta t=0.001$ and generate a time series which is sampled every $2$ time units for a total of $t=2\times 10^6$ time units. The results  from the deterministic fast-slow map~\eqref{eq:fs} are shown to converge as $\eps$ decreases although there are spurious narrow peaks to the left and right of the large peaks associated with the minima of the potential $V$. 
The spurious peaks decrease in size and move further away from the relevant part of the stationary measure as $\eps$ decreases. 
They are caused by unstable fixed points $z^\star$ of the fast-slow map~\eqref{eq:fs} which converge to $Z=\pm B$ as $\eps\to0$. \\

Figure~\ref{fig:SDE} shows the stationary density in Example~2 obtained from using the fast-slow map (\ref{eq:fs}) for large $t$ for several values of $\eps$. The plots were generated to reach to times $t=5\times 10^7$ time units, sampled every $100\eps^{-1}$ steps. We show the relevant part of the stationary density as well as the tails at $0$ and $\infty$. We again observe spurious narrow peaks in the tails caused by the fixed points of the slow map with $v\equiv
\pm \eta d_\alpha^{-\gamma}(1-2^{\gamma-1})^{-\gamma}$
which are the values of $v$ on $[0,x^*)$ where the fast dynamics spends most of its time.  These fixed points are given by 
\[
z^\star=0, \qquad 
z^\star = -p\pm\sqrt{p^2+1}
\]
with $p=\pm\frac12\eps^{\gamma-1}d_a^{-\gamma}(1-2^{\gamma-1})^{-\gamma}$. Hence $z^\star\to0, \, \pm\infty$ as $\eps\to 0$. We remark that the Euler-Maruyama discretisation leads to a bimodal stationary density, rather than to a unimodal stationary density with support $(0,\infty)$.\\ 

\begin{figure}
	\centering
	\includegraphics[width=27pc]{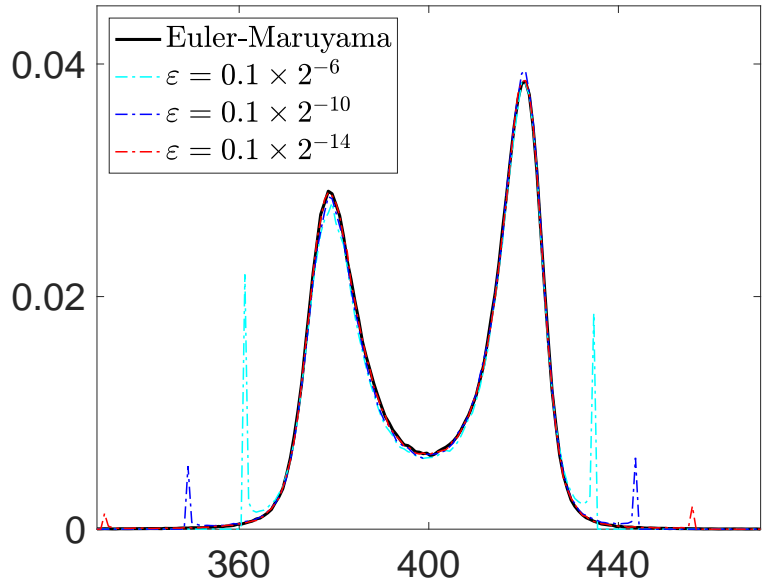}
	\caption{
Stationary density for the SDE in Example~1.  Results for the fast-slow map (\ref{eq:fs}) are shown for several values of $\eps$ and are compared with Euler-Maruyama discretisation.}
	\label{fig:SDE2}
\end{figure}

\begin{figure}
	\centering
	\includegraphics[width=17pc]{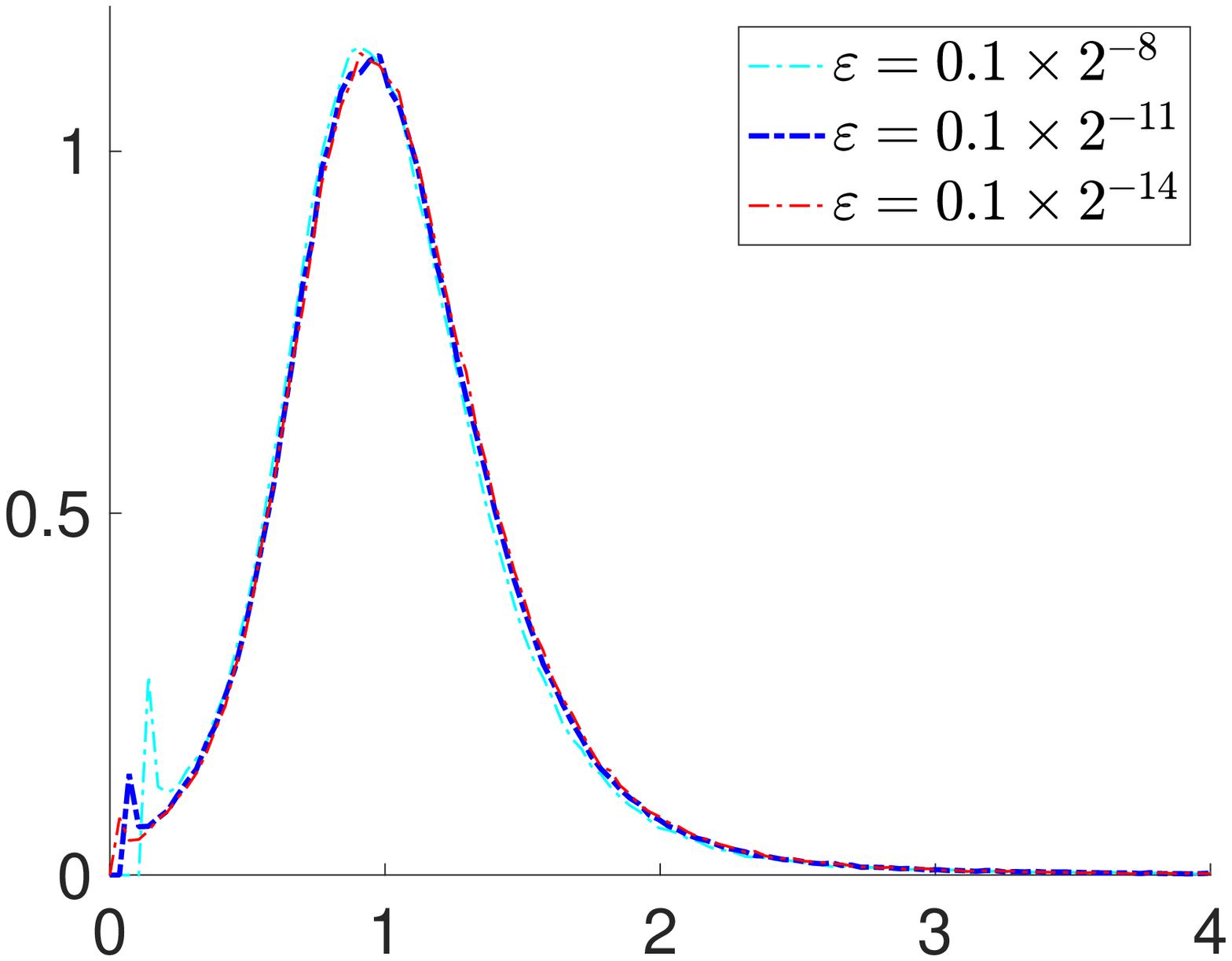}\;    
%       \vspace{1mm}
        \includegraphics[width=17pc]{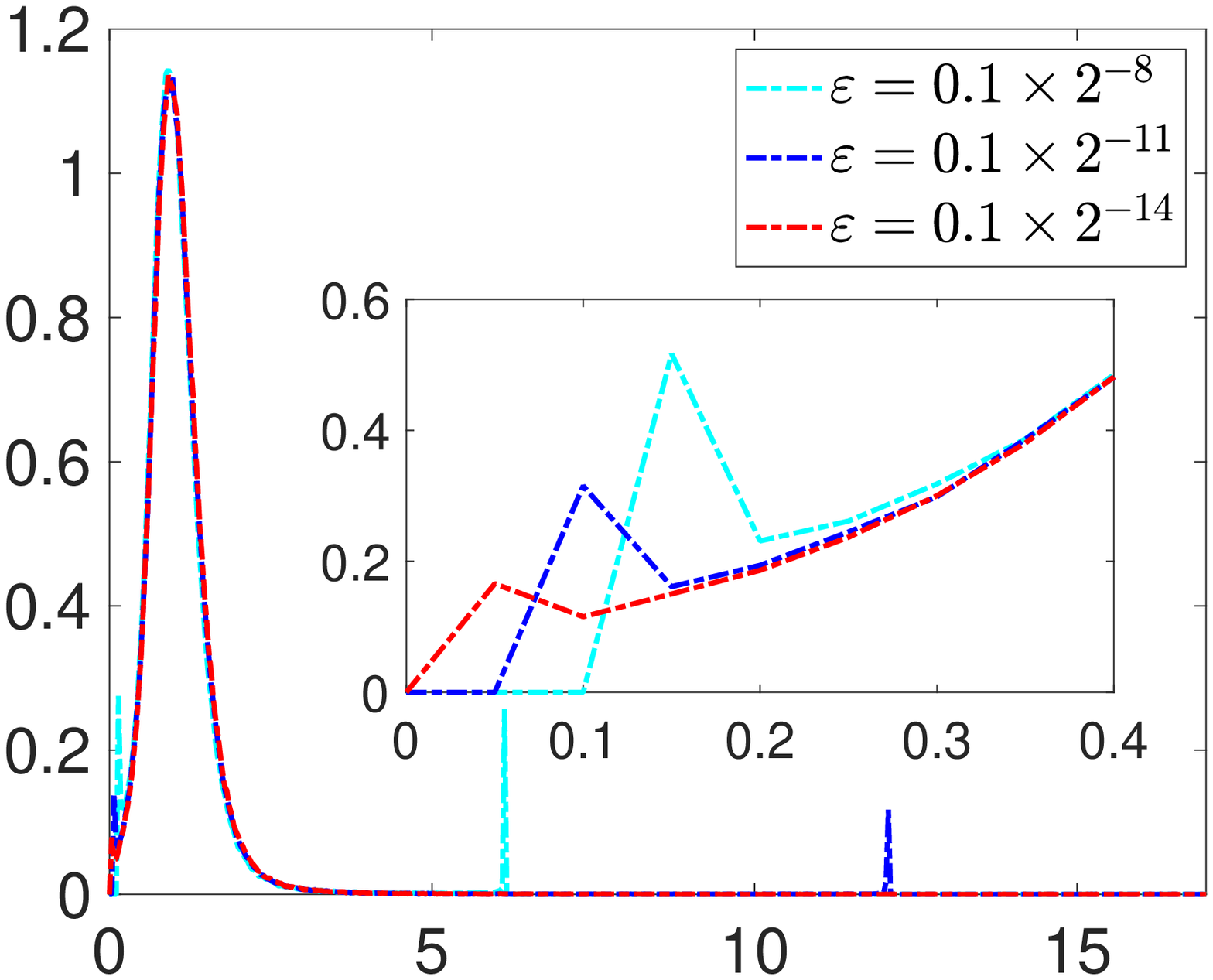}
 \caption{Stationary density for the SDE in Example~2.  Results for the fast-slow map (\ref{eq:fs}) are shown for several values of $\eps$.
Left: Relevant range. Right: Close-up of the spurious peaks for the stationary density computed using (\ref{eq:fs}).}
% Bottom: corresponding driving L\'evy process $W_{\alpha,\eta,\beta}$.}
	\label{fig:SDE}
\end{figure}

Moreover, our method is able to resolve temporal statistics of the underlying SDE. In Figure~\ref{fig:SDE2_C}, we compute the normalised auto-correlation function
\[
C(t) = \frac{1}{{\rm{Var}}[Z]}\int_0^\infty (Z(t+s)-\bar Z)(Z(s)-\bar Z)\, ds
%C(n) = \frac{c_n}{c_0} \qquad {\rm{with}} \qquad c_n=\frac{1}{N}\sum_{j=0}^{N-n} (z_{n+j}-\bar z)(z_j-\bar z)e
\]
%sample auto-correlation function
of solutions $Z$ to the SDE in Example~1
using the fast-slow map~\eqref{eq:fs} for various values of $\eps$. It is seen that the auto-correlation function converges to the reference auto-correlation function estimated from the time series obtained using the Euler-Maruyama method. The auto-correlation function is estimated using the same data used to obtain Figure~\ref{fig:SDE2}. 
We remark that whereas a time step of $\Delta t=0.001$ was sufficient to obtain the stationary density shown in Figure~\ref{fig:SDE2} using the Euler-Maruyama discretisation, the estimation of the auto-correlation function requires a smaller time step of $\Delta t=0.0001$, making Euler-Maruyama schemes more costly if resolving temporal statistics is required.\\

\begin{figure}
	\centering
	\includegraphics[width=27pc]{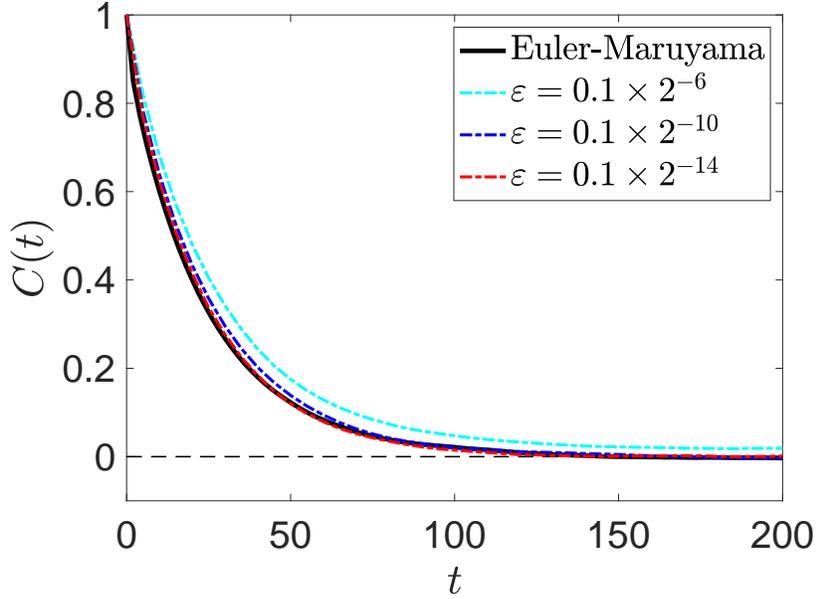}
	\caption{Auto-correlation function $C(t)$ of solutions $Z$ for the SDE in Example~1 estimated from the fast-slow map (\ref{eq:fs}) for several values of $\eps$, and from a direct discretisation using the Euler-Maruyama method as a reference. }
	\label{fig:SDE2_C}
\end{figure}

%%%%%%%%%%%%%%%%%%%%%%%%%%%%%%%%%%%%%%%%%%%%%%%%%%

\section{Proof of convergence of the algorithms}
\label{sec:proofs}

In this section we prove Theorems~\ref{thm:stable1},~\ref{thm:stable2},~\ref{thm:Levy} and~\ref{thm:SDE}.
%%%%%%%%%%%%%%%%%%%%%%%%%%%%%%%%%%%%%%%%%%%%%%%%%%

\subsection{Background on Gibbs-Markov maps}
\label{sec:GM}

We begin by defining the notion of Gibbs-Markov map following~\cite{Aaronson,AaronsonDenker01,ADU93}.
Suppose that $(Y,\mu_Y)$ is a probability space with
an at most countable measurable partition $\{Y_j,\,j\ge1\}$
and let $F:Y\to Y$ be a measure-preserving transformation.
We say that $F$ is full-branch if 
$F|_{Y_j}:Y_j\to Y$ is a measurable bijection for each $j\ge1$.

For $y,y'\in Y$, define the {\em separation time} $s(y,y')$ to be the least integer $n\ge0$ such that $F^ny$ and $F^ny'$ lie in distinct partition elements in $\{Y_j\}$.
It is assumed that the partition $\{Y_j\}$ separates trajectories, so $s(y,y')=\infty$ if and only if $y=y'$.  

\begin{defn} \label{def:GM}
A full-branch measure-preserving transformation map $F:Y\to Y$ is called a {\em Gibbs-Markov map} if it satisfies the following {\em bounded distortion} condition:
There exist constants $C>0$, $\theta\in(0,1)$ such that
the potential function $p=\log\frac{d\mu_Y}{d\mu_Y\circ F}:Y\to\R$ satisfies
\[
|p(y)-p(y')|\le C\theta^{s(y,y')} 
\]
for all $y,y'\in Y_j$, $j\ge1$.
\end{defn}

An observable
$V:Y\to\R$ is {\em locally constant} if $V$ is constant on partition elements.

\begin{thm}[Aaronson \& Denker]  \label{thm:Igor}
Let $F:Y\to Y$ be a Gibbs-Markov map with probability measure $\mu_Y$ and let
$V:Y\to\R$ be a locally constant observable.
Suppose that 
\[
\mu_Y(V>x) = (c_1+o(1))x^{-\alpha}, \qquad  
\mu_Y(V<-x) = (c_2+o(1))x^{-\alpha}\quad\text{as $x\to\infty$}, 
\]
where 
$\alpha\in(0,1)\cup(1,2)$, $c_1,c_2\ge0$, $c_1+c_2>0$.

Then
\[
n^{-1/\alpha}\Big(\sum_{j=0}^{n-1}V\circ F^j\,-\,a_n\Big)\to_d X_{\alpha,\eta,\beta} \quad\text{as $n\to\infty$}
\]
on the probability space $(Y,\mu_Y)$, where
\[
	\eta=\big((c_1+c_2)g_\alpha\big)^{-1/\alpha}, \qquad\beta=\frac{c_1-c_2}{c_1+c_2},
\]
with $g_\alpha$ as in~\eqref{eq:ga}, and
\(
a_n=\begin{cases} \hphantom{Y}0 & \alpha\in(0,1) \\
n\int_Y V\,d\mu_Y & \alpha\in(1,2)  \end{cases}.
\)
\end{thm}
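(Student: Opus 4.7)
The plan is to employ the Nagaev--Guivarc'h spectral perturbation method adapted to Gibbs-Markov maps (cf.~\cite{Aaronson,AaronsonDenker01}). I would first fix a Banach space $\mathcal{B}$ of functions on $Y$ that are Lipschitz with respect to the symbolic metric $d_\theta(y,y')=\theta^{s(y,y')}$, equipped with the norm $\|f\|_{\mathcal{B}}=\|f\|_\infty+|f|_\theta$. The bounded distortion condition of Definition~\ref{def:GM} is precisely what is needed to show that the transfer operator $L$ of $F$ with respect to $\mu_Y$ restricts to a bounded, quasi-compact operator on $\mathcal{B}$: the leading eigenvalue $1$ is simple (with eigenfunction the constant $1$) and the rest of the spectrum lies in a disk of radius $\rho<1$. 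Next, introduce the perturbed operator $L_tf=L(e^{itV}f)$ for $t\in\R$. Because $V$ is locally constant, $e^{itV}$ is constant on each partition element $Y_j$, so $L_t:\mathcal{B}\to\mathcal{B}$ is bounded and $t\mapsto L_t$ is continuous in operator norm at $t=0$. Kato perturbation theory then produces a continuous family of leading eigenvalues $\lambda(t)$ with $\lambda(0)=1$, together with the key identity
\[
\E_{\mu_Y}\bigl(e^{it\sum_{j=0}^{n-1}V\circ F^j}\bigr)=\int_Y L_t^n 1\,d\mu_Y=\lambda(t)^n\psi(t)+O(\rho^n),
\]
where $\psi(t)\to 1$ as $t\to 0$.

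The heart of the argument is computing the asymptotic expansion of $1-\lambda(t)$ as $t\to 0$. Let $h_t$ denote the leading eigenfunction of $L_t$ normalised so that $h_0\equiv 1$; the identity $\lambda(t)=\int e^{itV}h_t\,d\mu_Y$ reduces (after subtracting the mean when $\alpha\in(1,2)$) to the classical Feller-type calculation
\[
1-\int_Y e^{itV}\,d\mu_Y=K_\alpha(c_1,c_2,\beta)\,|t|^\alpha\bigl(1-i\beta\sgn(t)\tan\tfrac{\alpha\pi}{2}\bigr)+o(|t|^\alpha),
\]
a direct consequence of the regularly varying tails $\mu_Y(V>x)\sim c_1 x^{-\alpha}$ and $\mu_Y(V<-x)\sim c_2 x^{-\alpha}$, via the standard Mellin-type integrals involving $g_\alpha=\Gamma(1-\alpha)\cos(\alpha\pi/2)$ (see~\cite{Feller66}). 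The constant $K_\alpha$ is chosen precisely to match the stated formula for $\eta$. Substituting $t\mapsto tn^{-1/\alpha}$ and letting $n\to\infty$ gives
\[
\lambda(tn^{-1/\alpha})^n\,\psi(tn^{-1/\alpha})\to\E\bigl(e^{itX_{\alpha,\eta,\beta}}\bigr),
\]
and L\'evy's continuity theorem delivers the conclusion, with the centering $a_n$ from the theorem absorbing the linear-in-$t$ contribution in the $\alpha\in(1,2)$ case (and being zero when $\alpha<1$).

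The main obstacle is obtaining the expansion of $\lambda(t)$ to order $|t|^\alpha$ despite $V\notin L^\alpha(\mu_Y)$: controlling the error terms requires truncating $V$ at a level that grows to infinity with $1/|t|$ and estimating each piece separately, and in particular showing that $\int(e^{itV}-1)(h_t-1)\,d\mu_Y=o(|t|^\alpha)$. The latter relies on refined spectral estimates for $L_t$ on $\mathcal{B}$, where the locally constant hypothesis on $V$ pays off by making the operator-theoretic perturbation tractable (without it, $t\mapsto L_t$ need not even be continuous in operator norm at $t=0$ when the tails of $V$ are heavy); these estimates are carried out in detail in~\cite{AaronsonDenker01}.
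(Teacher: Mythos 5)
The paper does not actually prove this theorem: its entire proof is the citation ``This is a special case of~\cite{AaronsonDenker01}.'' Your sketch correctly reconstructs the Nagaev--Guivarc'h spectral perturbation argument that underlies that reference (quasi-compactness of the transfer operator on the symbolic Lipschitz space, norm-continuity of $t\mapsto L_t$ thanks to local constancy of $V$, and the Feller-type expansion of $1-\lambda(t)$ to order $|t|^\alpha$ from the regularly varying tails), so it is consistent with, and fills in, the approach the paper defers to.
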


\begin{proof}
This is a special case of~\cite{AaronsonDenker01}.
\end{proof}

\begin{rmk} \label{rmk:Igor}
The constraints on $V$ in Theorem~\ref{thm:Igor} guarantee that $V$ lies in the domain of the stable law $X_{\alpha,\eta,\beta}$.  That is, if $Z_1,Z_2,\ldots$ are i.i.d.\ copies of $V$, then 
$n^{-1/\alpha}\big(\sum_{j=1}^nZ_j\,-\,a_n\big)\to_d X_{\alpha,\eta,\beta}$
as $n\to\infty$.
Theorem~\ref{thm:Igor} guarantees that this remains true even though the increments $V\circ F^j$ are not independent in general.
\end{rmk}

\begin{thm}[Tyran-Kami{\'n}ska] \label{thm:TK}
Assume the set up of Theorem~\ref{thm:Igor}
and define the sequence of c\`adl\`ag processes $W_n(t)=n^{-1/\alpha}(\sum_{j=0}^{[nt]-1}V\circ F^j- a_n t)$ on the probability space $(Y,\mu_Y)$.
Then
$W_n\to_w W_{\alpha,\eta,\beta}$ in the 
Skorokhod $\cJ_1$-topology on $D[0,\infty)$ as \mbox{$n\to\infty$}.  
\end{thm}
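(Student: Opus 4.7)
The plan is to combine the one-dimensional convergence already supplied by Theorem~\ref{thm:Igor} with a tightness argument tailored to the Skorokhod $\cJ_1$ topology. The strategy decomposes naturally into three steps: finite-dimensional distributions, convergence of the jump-point process, and absence of clustering.

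First, I would upgrade Theorem~\ref{thm:Igor} to a finite-dimensional statement. For each $t$, the increment $W_n(t_k)-W_n(t_{k-1})$ is distributed like $W_n(t_k-t_{k-1})$ by the $F$-invariance of $\mu_Y$, so every one-dimensional marginal is handled by Theorem~\ref{thm:Igor}. Joint convergence requires asymptotic independence of distant increments. This is where the Gibbs-Markov hypothesis is essential: the bounded distortion condition in Definition~\ref{def:GM} yields exponential decay of correlations for locally constant (indeed, piecewise Lipschitz) observables, so inserting a buffer of size $O(\log n)$ between blocks of length $O(n)$ decouples them at negligible total-variation cost, reducing the problem effectively to the i.i.d.\ case where Skorokhod's classical functional limit theorem gives the required joint convergence to $W_{\alpha,\eta,\beta}$.

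Second, I would establish tightness via a point-process approach. Define
\[
\mathcal{N}_n=\sum_{j\ge0}\delta_{(j/n,\,n^{-1/\alpha}V(F^jy))}
\]
on $[0,\infty)\times(\R\setminus\{0\})$. The tail hypotheses on $V$ together with the asymptotic independence obtained in Step 1 should give weak convergence of $\mathcal{N}_n$ to the Poisson point process on $[0,\infty)\times(\R\setminus\{0\})$ with intensity $\Leb\otimes\nu_{\alpha,\eta,\beta}$, where $\nu_{\alpha,\eta,\beta}(dx)=c_1x^{-1-\alpha}\mathbf{1}_{\{x>0\}}dx+c_2|x|^{-1-\alpha}\mathbf{1}_{\{x<0\}}dx$ is the Lévy measure. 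Pairing this with a truncation estimate to show that the cumulative contribution of jumps below a threshold $\eps$ is $o_\eps(1)$ in probability uniformly in $n$ (trivial for $\alpha\in(0,1)$, and for $\alpha\in(1,2)$ following from a second-moment computation on the centered small-jump part, again using exponential mixing) then yields $\cJ_1$ convergence via standard results relating point-process convergence of jumps to convergence of partial-sum processes.

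The main obstacle is precisely the $\cJ_1$ requirement rather than the weaker $\cM_1$ topology used elsewhere in this paper. The $\cJ_1$ topology forbids ``jump splitting,'' so one must rule out clustering of consecutive large values of $V\circ F^j$. Here local constancy of $V$ is crucial: a large value corresponds to $y$ lying in a partition element $Y_k$ of small $\mu_Y$-measure, and by the bounded distortion condition the conditional probability that $Fy\in Y_\ell$ given $y\in Y_k$ is comparable to $\mu_Y(Y_\ell)$ with constant independent of $k$. Hence the probability that two consecutive iterates both produce large values of $V$ is of order $\mu_Y(|V|>x)^2$, which is negligible on the $n^{-1/\alpha}$ scale. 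This quantitative absence of clustering is what distinguishes the locally constant setting (where $\cJ_1$ holds) from the general Hölder setting (where one recovers only $\cM_1$), and it is the technical heart of the argument.
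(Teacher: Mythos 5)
The paper does not prove this result at all: it is stated as an attribution to Tyran-Kami\'nska and the ``proof'' is a one-line citation of \cite{TyranKaminska10}, so there is nothing in the paper to compare your argument against line by line. What you have written is essentially a reconstruction of the strategy of that cited proof (which in turn rests on the Durrett--Resnick point-process method): convergence of the point process of rescaled jumps to a Poisson point process with the stable L\'evy measure, negligibility of the compensated small jumps, and an anti-clustering estimate that rules out jump splitting and hence upgrades $\cM_1$ to $\cJ_1$. You have correctly identified the technical heart --- that for a full-branch Gibbs--Markov map the bounded distortion condition gives $\mu_Y(Y_k\cap F^{-j}A)\le C\,\mu_Y(Y_k)\,\mu_Y(A)$ uniformly in $k$ and $j$, which is exactly why locally constant observables of the induced map yield $\cJ_1$ while the non-induced system in Theorem~\ref{thm:SDE} only yields a weaker mode of convergence.

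A few points would need tightening before this could stand as a proof. First, your appeal to ``exponential decay of correlations for locally constant observables'' cannot be applied to $V$ itself, since $V\notin L^2$ (indeed $\E|V|^\alpha=\infty$); the mixing estimates must be applied to the bounded objects in the argument, namely indicators of sets such as $\{n^{-1/\alpha}|V|>\eps\}$ (unions of partition elements) and the truncated, centred small-jump part --- this is what your second-moment computation actually uses, and it works, but as written the claim is too strong. Second, the anti-clustering condition required is not only for consecutive iterates but for all lags $1\le j\le n/k$ (the analogue of condition $D'$); the same bounded-distortion inequality gives the bound $C\,\mu_Y(|V|>x)^2$ uniformly in $j$, so the fix is immediate, but you should state it. Third, the L\'evy measure should carry a factor of $\alpha$: with tails $\mu_Y(V>x)\sim c_1x^{-\alpha}$ the intensity is $\alpha c_1x^{-1-\alpha}\mathbf{1}_{\{x>0\}}\,dx+\alpha c_2|x|^{-1-\alpha}\mathbf{1}_{\{x<0\}}\,dx$. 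Finally, your Step~1 (finite-dimensional distributions via blocking) is redundant once the point-process convergence and small-jump control are in place, since the Durrett--Resnick machinery delivers the full functional convergence in $\cJ_1$ directly.
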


\begin{proof}
This is a special case of~\cite{TyranKaminska10}.
\end{proof}

\subsection{Induced Thaler maps}
\label{sec:F}

Let $T:[0,1]\to[0,1]$ be a Thaler map as defined in~\eqref{eq:Thaler} with parameter
$\gamma\in(0,1)\cup(1,\infty)$.
For each $\gamma$, there is a unique (up to scaling) $\sigma$-finite absolutely continuous invariant measure $\mu$ with density $h$ as in~\eqref{eq:h}, and
$\mu$ is finite if and only if $\gamma<1$.

Let $Y=(x^\star,1]$.  We consider the first return time 
$\tau:Y\to\Z^+$ and the first return map $F=T^\tau:Y\to Y$,
\[
\tau(y)=\inf\{n\ge1:T^ny\in Y\}, \qquad Fy=T^{\tau(y)}y.
\]
We refer to $F$ as the {\em induced Thaler map}.
The probability measure $\mu_Y=\mu|_Y/\mu(Y)$
is $F$-invariant and ergodic.

\begin{prop} \label{prop:tau} For each $\gamma\in(0,1)\cup(1,\infty)$, we have
	$\mu_Y(\tau>n)\sim e_\alpha n^{-\alpha}$ as $n\to\infty$ where $\alpha=\gamma^{-1}$ and
$\BIG e_\alpha= \alpha^{\alpha}\frac{1-\gamma}{2^{1-\gamma}-1}=d_\alpha g_\alpha^{-1}$.
\end{prop}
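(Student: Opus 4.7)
The plan is to derive an exact closed-form expression for $\mu_Y(\tau>n)$ in terms of a sequence of backward iterates of the break point $x^\star$ under the left branch of $T$, and then extract the asymptotic from the explicit form of the Thaler map.

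First I would set up the preimage sequence. Let $T_0:[0,x^\star]\to[0,1]$ and $T_1:[x^\star,1]\to[0,1]$ denote the left and right branches of $T$; both are bijections, so I can define $\xi_n=T_0^{-n}(x^\star)$. Since $T_0$ is increasing with fixed point $0$, these satisfy $\xi_0=x^\star>\xi_1>\xi_2>\cdots\to 0$. A straightforward induction gives $T_0^{-k}([0,x^\star])=[0,\xi_k]$, and hence
\[
\{y\in Y:\tau(y)>n\}=\{y\in Y:Ty\in[0,\xi_{n-1}]\}=T_1^{-1}([0,\xi_{n-1}]).
\]
Using $T$-invariance of $\mu$ together with the disjoint decomposition $T^{-1}[0,\xi_{n-1}]=[0,\xi_n]\sqcup T_1^{-1}[0,\xi_{n-1}]$ gives $\mu(T_1^{-1}[0,\xi_{n-1}])=\mu((\xi_n,\xi_{n-1}])$, so $\mu_Y(\tau>n)=\mu(Y)^{-1}\int_{\xi_n}^{\xi_{n-1}}h\,dx$.

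Next I would evaluate this integral explicitly. The identity $T_0\xi_n=\xi_{n-1}$ combined with the alternative form of the Thaler map displayed in the remark after~\eqref{eq:Thaler} yields the key recursion
\[
\xi_{n-1}^{1-\gamma}-\xi_n^{1-\gamma}=(1+\xi_n)^{1-\gamma}-1.
\]
Integrating $h(x)=x^{-\gamma}+(1+x)^{-\gamma}$ termwise over $[\xi_n,\xi_{n-1}]$ produces four pieces, two of which collapse via this recursion, leaving the clean identity $\int_{\xi_n}^{\xi_{n-1}}h\,dx=((1+\xi_{n-1})^{1-\gamma}-1)/(1-\gamma)$. Combined with $\mu(Y)=(2^{1-\gamma}-1)/(1-\gamma)$ (computed in the text just before the proposition) this gives the exact formula
\[
\mu_Y(\tau>n)=\frac{(1+\xi_{n-1})^{1-\gamma}-1}{2^{1-\gamma}-1},
\]
valid uniformly in $\gamma\in(0,1)\cup(1,\infty)$.

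The remaining task is to show $\xi_n\sim\alpha^\alpha n^{-\alpha}$, equivalently $u_n:=\xi_n^{-\gamma}\sim\gamma n$. Taylor expanding the recursion as $(\xi_{n-1}/\xi_n)^{1-\gamma}=1+(1-\gamma)\xi_n^\gamma+O(\xi_n^{1+\gamma})$ and raising to the power $-\gamma/(1-\gamma)$ gives $u_{n-1}=u_n-\gamma+O(\xi_n^{\min(1,\gamma)})$; since $\xi_n\to 0$, this is $u_n-u_{n-1}=\gamma+o(1)$, and a Ces\`aro argument yields $u_n/n\to\gamma$. Substituting into the exact formula and using $(1+\xi)^{1-\gamma}-1=(1-\gamma)\xi+O(\xi^2)$ as $\xi\to 0$ gives
\[
\mu_Y(\tau>n)\sim\frac{1-\gamma}{2^{1-\gamma}-1}\,\alpha^\alpha\,n^{-\alpha}=e_\alpha n^{-\alpha},
\]
as required; the second form $e_\alpha=d_\alpha g_\alpha^{-1}$ is then just~\eqref{eq:da} unpacked.

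The main obstacle is bookkeeping rather than conceptual difficulty: one must treat $\gamma\in(0,1)$ and $\gamma\in(1,\infty)$ simultaneously, even though $\mu$ is finite in the first case and infinite in the second and the sign of $1-\gamma$ (hence the monotonicity of $\xi_n^{1-\gamma}$) flips. The exact-formula approach handles this uniformly, since the expression $((1+\xi)^{1-\gamma}-1)/(1-\gamma)$ is smooth in $\gamma$ with the same leading behaviour $\xi+O(\xi^2)$ in both regimes, so the sign issues never enter the final asymptotic.
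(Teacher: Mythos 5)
Your argument is correct in substance and reaches the stated constant, but it takes a genuinely different route in its second half. Both proofs begin the same way: the asymptotics $\xi_n\sim\alpha^\alpha n^{-\alpha}$ for the backward orbit of the left branch is exactly the paper's Step~1 (there the recursion is conjugated to $u_n=\xi_n^{-\gamma}$ via the expansion $Tx=x(1+x^\gamma+O(x^{2\gamma}))$, which is the same Ces\`aro argument you use). The divergence is in how the tail estimate is transported to $Y$. The paper identifies $\{\tau>n\}$ with an interval $[x^\star,y_n]$, applies the mean value theorem to the right branch to get $y_n-x^\star\sim T'(x^\star)^{-1}\xi_{n-1}$, observes the pleasant coincidence $T'(x^\star)=h(x^\star)$, and then estimates $\int_{x^\star}^{y_n}h$ asymptotically. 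You instead use invariance of $\mu$ plus the additive form of the Thaler map to produce the \emph{exact} identity $\mu_Y(\tau>n)=\bigl((1+\xi_{n-1})^{1-\gamma}-1\bigr)/(2^{1-\gamma}-1)$, which is arguably cleaner: it avoids the MVT/second-derivative bookkeeping and the computation of $T'(x^\star)$, and it isolates all the asymptotic analysis in the single sequence $\xi_n$. The telescoping of the four antiderivative terms via $\xi_{n-1}^{1-\gamma}-\xi_n^{1-\gamma}=(1+\xi_n)^{1-\gamma}-1$ checks out, as does the evaluation $\mu(Y)=(2^{1-\gamma}-1)/(1-\gamma)$ and the final substitution.

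There is one step you should shore up. You obtain $\mu(T_1^{-1}[0,\xi_{n-1}])=\mu((\xi_n,\xi_{n-1}])$ by writing $T^{-1}[0,\xi_{n-1}]=[0,\xi_n]\sqcup T_1^{-1}[0,\xi_{n-1}]$ and subtracting $\mu([0,\xi_n])$ from both sides of the invariance identity. For $\gamma>1$ --- precisely the regime you emphasise handling uniformly --- $h$ is nonintegrable at $0$, so $\mu([0,\xi_{n-1}])=\mu([0,\xi_n])=\infty$ and the subtraction is $\infty-\infty$. The fix is routine: apply invariance to the truncated set $[\eps,\xi_{n-1}]$, whose two preimage components are $[\phi_0(\eps),\xi_n]$ and $T_1^{-1}[\eps,\xi_{n-1}]$ with $\phi_0$ the left inverse branch; rearranging gives $\mu(T_1^{-1}[\eps,\xi_{n-1}])=\int_{\xi_n}^{\xi_{n-1}}h-\int_{\phi_0(\eps)}^{\eps}h$, and the error term is $O(h(\phi_0(\eps))(\eps-\phi_0(\eps)))=O(\eps)\to0$. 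With that patch the exact formula, and hence the whole proof, stands.
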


\begin{proof}
\textbf{Step 1:} 
Let $x_n$ be the decreasing sequence in $(0,x^\star]$, such that $Tx_{n+1}=x_n$, $n\ge1$.
Note that $Tx=x(1+x^\gamma+O(x^{2\gamma}))$ on $[0,x^\star]$.
Let $\phi:[0,1]\to[0,x^\star]$ be the inverse of this branch and write
\[
	\phi(x)=x(1-x^\gamma \psi(x)), \quad \psi(x)=1+O(x^\gamma).
\]
Then
\begin{align*}
	\phi(x) & = [x^{-\gamma}(1-x^\gamma \psi(x))^{-\gamma}]^{-1/\gamma}
 = [x^{-\gamma}+\gamma\hat \psi(x)]^{-1/\gamma},
\end{align*}  
where $\hat \psi(x)=1+O(x^\gamma)$.
Inductively,
\[
	\phi^nx=\Bigl[x^{-\gamma}+\gamma\sum_{j=0}^{n-1}\hat \psi(\phi^jx)\Bigr]^{-1/\gamma}.
\]
Set $x=1$ so $\phi^nx=x_n\to0$.
Then $\sum_{j=0}^{n-1}\hat \psi(\phi^jx)=\sum_{j=0}^{n-1}\hat \psi(x_j)=n+o(n)$ as $n\to\infty$.  Hence
\[
x_n=\phi^n1=[1+\gamma n+o(n)]^{-1/\gamma}
\sim (\gamma n)^{-1/\gamma}=\alpha^\alpha n^{-\alpha}.
\]

\noindent\textbf{Step 2:} 
Now let $y_n\in(x^\star,1]$ with $Ty_n=x_n$.
Let $T_2=T|_Y$ be the second branch and note that
$T_2$ maps the interval $[x^\star,y_n]$ onto $[0,x_n]$.
By the mean value theorem
\[
x_n-0=T_2'(y)(y_n-x^\star),
\]
for some $y\in [x^\star,y_n]$.
Moreover $|T_2'(y)-T_2'(x^\star)|\le |T_2''|_\infty(y-x^\star)\ll y_n-x^\star\to0$
as $n\to\infty$.
Hence $T_2'(y)\sim T_2'(x^\star)$.
Combining these calculations with step 1, we have
\[
y_n-x^\star
\sim (T'(x^\star))^{-1}x_n
\sim (T'(x^\star))^{-1}\alpha^{\alpha}n^{-\alpha}.
\]
Now,
\begin{align*}
T'(x^\star) & =({x^\star}^{1-\gamma}+(1+x^\star)^{1-\gamma}-1)^{\gamma/(1-\gamma)}
\{{x^\star}^{-\gamma}+(1+x^\star)^{-\gamma}\}
\\ & =
\{{x^\star}^{-\gamma}+(1+x^\star)^{-\gamma}\}
=h(x^*).
\end{align*}
Hence
$y_n-x^\star \sim \alpha^\alpha h(x^*)^{-1}n^{-\alpha}$.

\noindent\textbf{Step 3:} 
We use the formula for the density in~\eqref{eq:h}.
Observe that
\begin{align*}
\mu_Y(\tau>n) & =\mu(Y)^{-1}\int_{x^\star}^{y_n}h(y)dy
\\ & = \mu(Y)^{-1}(y_n-x^\star)h(x^\star)+\mu(Y)^{-1}\int_{x^\star}^{y_n}(h(y)-h(x^\star))\,dy.
\end{align*}
Since $h$ is $C^1$, we obtain that $\int_{x^\star}^{y_n}(h(y)-h(x^\star))\,dy=O((y_n-x^\star)^2)$.  Hence
$\mu_Y(\tau>n)\sim \mu(Y)^{-1}(y_n-x^\star)h(x^\star)$.
By Step~2,
$\mu_Y(\tau>n)\sim \alpha^\alpha\mu(Y)^{-1}n^{-\alpha}$.
It follows from~\eqref{eq:x1} and~\eqref{eq:h} that
$\mu(Y)=\frac{2^{1-\gamma}-1}{1-\gamma}$.
Hence
	$\mu_Y(\tau>n)\sim e_\alpha n^{-\alpha}$ where 
$e_\alpha= \alpha^{\alpha}\frac{1-\gamma}{2^{1-\gamma}-1}$.
By~\eqref{eq:da}, $e_\alpha=d_\alpha g_\alpha^{-1}$.
\end{proof}

\subsection{Proof of limit theorems}
\label{sec:proof}

In this subsection we prove Theorems~\ref{thm:stable1},~\ref{thm:stable2},~\ref{thm:Levy} and~\ref{thm:SDE}.

\begin{prop} \label{prop:Kac}
\(
\BIG \int_Y\tau\,d\mu_Y
=(1-2^{\gamma-1})^{-1}
\) 
for $\gamma<1$.
\end{prop}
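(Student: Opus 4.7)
The plan is to apply Kac's formula to the Thaler map $T$ on the probability space $([0,1],\bar\mu)$, where $\bar\mu$ is the normalised invariant probability measure. Since $T$ is ergodic with respect to its finite invariant measure $\mu$ (with density $h$) for $\gamma<1$, Kac's formula gives
\[
\int_Y \tau\, d\mu|_Y \;=\; \mu([0,1]),
\]
so after dividing by $\mu(Y)$ we get
\[
\int_Y \tau\, d\mu_Y \;=\; \frac{\mu([0,1])}{\mu(Y)}.
\]
So the proof reduces to two elementary integral computations.

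For the numerator, using the explicit density $h(x)=x^{-\gamma}+(1+x)^{-\gamma}$ from~\eqref{eq:h}, I would compute
\[
\mu([0,1]) \;=\; \int_0^1 x^{-\gamma}\,dx + \int_0^1 (1+x)^{-\gamma}\,dx
\;=\; \frac{1}{1-\gamma} + \frac{2^{1-\gamma}-1}{1-\gamma}
\;=\; \frac{2^{1-\gamma}}{1-\gamma}.
\]
For the denominator, the calculation $\mu(Y)=\int_{x^*}^1 h\,dx=\frac{2^{1-\gamma}-1}{1-\gamma}$ was already carried out in the discussion preceding the statement of Theorem~\ref{thm:stable1} (and used again in Step~3 of the proof of Proposition~\ref{prop:tau}), relying on the defining identity~\eqref{eq:x1} for $x^\star$. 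So I would simply cite it.

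Combining these,
\[
\int_Y \tau\, d\mu_Y \;=\; \frac{2^{1-\gamma}}{2^{1-\gamma}-1} \;=\; \frac{1}{1-2^{\gamma-1}} \;=\; (1-2^{\gamma-1})^{-1},
\]
by dividing numerator and denominator by $2^{1-\gamma}$.

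There is no real obstacle. The only subtlety worth checking is that Kac's formula is applicable: one needs $T$ to be conservative, ergodic, and measure preserving with $\mu(Y)>0$ and $\bigcup_{k\ge 0}T^{-k}Y$ of full $\mu$-measure. For $\gamma<1$ the measure $\mu$ is finite, $T$ is known to be ergodic (indeed mixing) on $([0,1],\mu)$ by the standard theory of Pomeau-Manneville-type maps cited in the paper, and $Y=(x^\star,1]$ clearly has positive measure, so all the hypotheses of Kac are met.
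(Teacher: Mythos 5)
Your proposal is correct and follows essentially the same route as the paper: the paper also invokes Kac's lemma to reduce the claim to $\mu([0,1])/\mu(Y)$ and uses the explicit density $h$ to evaluate this ratio as $(1-2^{\gamma-1})^{-1}$. Your version merely spells out the integrals $\mu([0,1])=\frac{2^{1-\gamma}}{1-\gamma}$ and $\mu(Y)=\frac{2^{1-\gamma}-1}{1-\gamma}$ that the paper leaves implicit.
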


\begin{proof}
Recall that $\mu([0,1])<\infty$ for $\gamma<1$.
Define the probability measure $\tilde\mu=\mu([0,1])^{-1}\mu$ on $[0,1]$.
Since $\tau$ is the first return to $Y$, it follows from Kac' lemma that
\[
\int_Y\tau\,d\mu_Y=\frac{1}{\tilde\mu(Y)}
=\frac{\mu([0,1])}{\mu(Y)}
=\frac{1}{1-2^{\gamma-1}}
\] 
as required.
\end{proof}

\begin{pfof}{Theorem~\ref{thm:stable1}}
Let $F:Y\to Y$ be the induced Thaler map as in Subsection~\ref{sec:F} with parameter $\gamma=\alpha^{-1}$.
Then $F$ is full-branch relative to the partition $Y_j=\{\tau=j\}$ of $Y$.
Moreover, $F$ has bounded distortion~\cite{Thaler80,Thaler00} and hence is a
Gibbs-Markov map as defined in Subsection~\ref{sec:GM}.
Note that $\tau_j$ in the statement of the theorem is precisely $\tau\circ F^j$.

Define $V:Y\to\R$, $V=d_\alpha^{-\gamma}\tau$.
Then $V$ is locally constant and $V\ge0$.
By Proposition~\ref{prop:tau},
\begin{equation} \label{eq:stable1}
\mu_Y(V>x)=\mu_Y(\tau>d_\alpha^\gamma x)
\sim e_\alpha (d_\alpha^\gamma x)^{-\alpha}
=g_\alpha^{-1} x^{-\alpha}
\end{equation}
as $x\to\infty$.
Hence we have verified the hypotheses of Theorem~\ref{thm:Igor}
with $c_1= g_\alpha^{-1}$ and $c_2=0$.
It follows that
\[
n^{-\gamma}d_\alpha^{-\gamma}\Big(\sum_{j=0}^{n-1}\tau_j \,- \, d_\alpha^\gamma a_n\Big)=
n^{-1/\alpha}\Big(\sum_{j=0}^{n-1}V\circ F^j \,- \, a_n\Big)\to_d X_{\alpha,1,1} 
\quad\text{as $n\to\infty$}.
\]

It remains to evaluate $a_n$ as defined in Theorem~\ref{thm:Igor}.
When $\alpha<1$, we have $a_n=0$.
For $\alpha>1$, 
\[
a_n=n\int_Y V\,d\mu_Y=n d_\alpha^{-\gamma}\int_Y\tau\,d\mu_Y
=n d_\alpha^{-\gamma}
(1-2^{\gamma-1})^{-1}
\] 
by Proposition~\ref{prop:Kac}.
Hence $d_\alpha^\gamma a_n=n\ell_\alpha$ completing the proof.
\end{pfof}

%%%%%%%%%%%%%%%%%%%%%%%%%%%%%%%%%%%%%%%%%%%%%%%%%%

\begin{pfof}{Theorems~\ref{thm:stable2} and~\ref{thm:Levy}}
Let $\Sigma=\{\pm1\}^\N$ denote the space of sequences
$\omega=(\omega_0,\omega_1,\omega_2,\ldots)$ with entries
$\omega_j\in\{\pm1\}$.  Let $\sigma:\Sigma\to\Sigma$
denote the one-sided shift $\sigma(\omega)=(\omega_1,\omega_2,\omega_3,\ldots)$.
Let $\lambda$ denote
the Bernoulli probability measure on $\Sigma$
with $\lambda(\omega_0=\pm1)=\frac12(1\pm\beta)$.

Now let $F:Y\to Y$ be the induced Thaler map as in Subsection~\ref{sec:F} with 
parameter $\gamma=\alpha^{-1}$.
Define $\tY=Y\times\Sigma$ and $\tF:\tY\to\tY$,
\[
\tF(y,\omega)=(Fy,\sigma\omega).
\]
The product measure $\tilde\mu=\mu_Y\times\lambda$ is
an ergodic $\tF$-invariant probability measure on $\tY$.
Define the partition $\{\tY_j^+,\,\tY_j^-,\,j\ge1\}$ of $\tY$,
where $\tY_j^{\pm}=\{(y,\omega):y\in Y_j,\,\omega_0=\pm1\}$.
Again $\tF$ is full-branch with
bounded distortion~\cite{Thaler80,Thaler00} and hence is a
Gibbs-Markov map as defined in Subsection~\ref{sec:GM}.

Define the locally constant observable
\[
V:\tY\to\R, \qquad
V(y,\pm1)=\pm d_\alpha^{-\gamma}\tau.
\]
Then
\[
\SMALL
\tilde\mu\big((y,\omega):V(y)>x\big)=
\tilde\mu\big((y,\omega):\omega_0=1,\,\tau(y)>d_\alpha^\gamma x\big)
=
\lambda(\omega_0=1)\mu_Y(\tau>d_\alpha^\gamma x).
\]
Hence by~\eqref{eq:stable1},
\[
\tilde\mu(V>x)\sim c_1 x^{-\alpha}, \qquad
c_1=\tfrac12(1+\beta) g_\alpha^{-1} 
\]
as $x\to\infty$.
Similarly,
\[
\tilde\mu(V<-x)\sim c_2 x^{-\alpha}, \qquad 
c_2=\tfrac12(1-\beta) g_\alpha^{-1},
\]
as $x\to\infty$ and
we obtain
\[
c_1+c_2=g_\alpha^{-1},
\qquad \frac{c_1-c_2}{c_1+c_2}=\beta.
\]
Hence it follows from Theorem~\ref{thm:Igor} that
\[
n^{-\gamma}d_\alpha^{-\gamma}\Big(\sum_{j=0}^{n-1}\delta_j\tau_j\,-\,d_\alpha^\gamma a_n\Big)
=
n^{-1/\alpha}\Big(\sum_{j=0}^{n-1}V\circ \tF^j\,-\,a_n\Big)\to_d X_{\alpha,1,\beta}
\quad\text{as $n\to\infty$}.
\]
When $\alpha<1$ we have $a_n=0$.  For $\alpha>1$,
\begin{align*}  
a_n= n\int_\tY V\,d\tilde\mu 
=n d_\alpha^{-\gamma}\beta\int_Y \tau \,d\mu_Y
 =n d_\alpha^{-\gamma}\beta 
(1-2^{\gamma-1})^{-1}
\end{align*}
by Proposition~\ref{prop:Kac}.
Hence $d_\alpha^\gamma a_n=n\beta\ell_\alpha$ completing the proof of
Theorem~\ref{thm:stable2}.

Theorem~\ref{thm:Levy} is now an immediate consequence of Theorem~\ref{thm:TK}.
\end{pfof}

\begin{pfof}{Theorem~\ref{thm:SDE}}
We verify the hypotheses of~\cite[Theorem~2.6]{ChevyrevEtAl19}.
The beginning of the proof is similar to the proof of
Theorem~\ref{thm:stable2}.
Define the induced observable
\[
V:\tY\to\R, \qquad V(y,\omega)=\sum_{j=0}^{\tau(y)-1}v^{(j)}(y,\omega).
\]
Then
\begin{align*}
V(y,\pm1) & =
\pm\eta d_\alpha^{-\gamma}(1-2^{\gamma-1})^{-\gamma}
\big((1-2^{1-\gamma})^{-1}+(\tau-1)\big)
\\ & =\pm\eta d_\alpha^{-\gamma}(1-2^{\gamma-1})^{-\gamma}
\big(\tau-(1-2^{\gamma-1})^{-1}\big).
\end{align*}
This differs from the observable $V$ in the proof of
Theorem~\ref{thm:stable2} in that $V$ is already centred and there is an extra factor of $\eta (1-2^{\gamma-1})^{-\gamma}$.
Hence by Theorem~\ref{thm:Igor},
$n^{-\gamma}\sum_{j=0}^{n-1}V\circ\tF^j\to_d
\eta (1-2^{\gamma-1})^{-\gamma} X_{\alpha,1,\beta}$.
By Remark~\ref{rmk:eta},
$n^{-\gamma}\sum_{j=0}^{n-1}V\circ\tF^j\to_d
(1-2^{\gamma-1})^{-\gamma} X_{\alpha,\eta,\beta}$.

Next, define the induced process 
$\tW_n(t)=n^{-\gamma}\sum_{j=0}^{\lfloor nt\rfloor -1}V\circ\tF^j$.
It is immediate from Theorem~\ref{thm:TK} that
$\tW_n\to_w 
(1-2^{\gamma-1})^{-\gamma} W_{\alpha,\eta,\beta}$ in $D[0,\infty)$
in the $\cJ_1$ topology (and hence in the $\cM_1$ topology).

We now apply~\cite[Theorem~2.2]{MZ15} with $B(n)=n^{-\gamma}$.  The  technical assumption~(2.2) in~\cite{MZ15} holds for all intermittent maps, including Thaler maps, by the argument in~\cite[Section~4]{MZ15}.
It follows from~\cite[Theorem~2.2 and Remark~2.3]{MZ15} and the convergence result for $\tW_n$ that
$W_n\to_w (\int_Y \tau\,d\mu_Y)^{-\gamma}(1-2^{\gamma-1})^{-\gamma} W_{\alpha,\eta,\beta}$ in $D[0,\infty)$
in the $\cM_1$ topology.
By Proposition~\ref{prop:Kac}, $W_n\to_w W_{\alpha,\eta,\beta}$.
This is the first hypothesis of~\cite[Theorem~2.6]{ChevyrevEtAl19}.

The remaining hypothesis of~\cite[Theorem~2.6]{ChevyrevEtAl19} concerns tightness in $p$-variation.  
Recall that
$\tF$ is Gibbs-Markov and the return time $\tau\ge1$ 
satisfies $\mu_Y(\tau>n)\sim {\rm const.}\, n^{-\alpha}$.
In particular, $\tau$ is regularly varying with index $\alpha$.  
Hence the desired tightness in $p$-variation is a consequence of~\cite[Theorem~6.2]{ChevyrevEtAl19}.  
This completes the proof.
\end{pfof}

%%%%%%%%%%%%%%%%%%%%%%%%%%%%%%%%%%%%%%%%%%%%%%%%%%

\section{Discussion and outlook}
\label{sec:disc}
In this paper, we designed a conceptually new method, based on homogenisation theory, for numerically simulating SDEs driven by L\'evy noise. Rather than employing a direct form of discretisation of the SDE using Taylor-expansion as done in Euler-Maruyama type discretisations, we view a continuous-time SDE as a limit of deterministic fast-slow maps. This is achieved by applying statistical limit theorems to judiciously chosen observables of intermittent Pomeau-Manneville maps. In particular, we used the intermittent Thaler map for which calculations can be done analytically. Using an induced version of the Thaler map, we deterministically generated stable laws and the associated L\'evy processes for any user-specified parameters $\alpha \in(0,1)\cup(1,\infty)$, $\eta$ and $\beta$. For the numerical approximation of SDEs driven by L\'evy processes with $\alpha\in(1,2)$, we considered limits of suitable fast-slow maps where the fast dynamics is a non-induced Thaler map. We provide rigorous proofs employing recent statistical limit laws and deterministic homogenisation theory for the convergence of our methods. 

Our method is particularly designed to deal with Marcus SDEs with non-Lipschitz drift and diffusion terms. We showed in numerical examples that our approach is able to reproduce the statistics of $\alpha$-stable laws and $\alpha$-stable L\'evy processes as well as of SDEs. Moreover, going beyond the theory, our numerical treatment of Marcus SDEs was able to reproduce the stationary density as well as capture temporal statistics in the form of the auto-correlation function. In our numerical examples we considered one-dimensional Marcus SDEs with multiplicative noise that is exact in the sense that a change of coordinates leads to an additive noise structure for the transformed SDE. Our second example showed that although additive noise SDEs are in principle amenable to Euler-Maruyama type discretisations, this may lead to false results when there are natural boundaries. The usefulness of our fast-slow map approximation will be even more evident in the setting of multi-dimensional Marcus SDEs with non-Lipschitz drift and diffusion terms, where typically a change of coordinates cannot lead to a transformed system with additive noise structure, making Euler-Maruyama discretisations much less straightforward. 

Our strategy to approximate SDEs by deterministic fast-slow maps is not restricted to SDEs driven by L\'evy noise. Unbounded increments also occur for SDEs driven by Brownian motion and non-Lipschitz drift and diffusion terms similarly pose well known limitations for traditional discretisation schemes. Homogenisation theory for deterministic fast-slow systems with strongly chaotic dynamics leading to SDEs on the diffusive time scale driven by Brownian motion is well 
developed \cite{Dolgopyat04,Dolgopyat05,GottwaldMelbourne13c,KellyMelbourne16,CFKMZsub}
and can be applied along the lines pursued here.
The equivalent of the Marcus integral for SDEs driven by Brownian motion is the Stratonovich integral, preserving classical calculus. However, in the case of Brownian motion, the fast-slow maps typically generate corrections to the drift terms which are neither It\^o nor Stratonovich (see for example \cite{GivonKupferman04,MacKay10,GottwaldMelbourne13c,KellyMelbourne16,FrankGottwald18}). In principle, these additional terms could be accounted for by introducing modified drift terms in the fast-slow map, but such terms involve correlation functions and would require computationally costly estimations. Hence, the power of our approach which uses analytic calculations when designing the appropriate fast-slow maps, really lies within the realm of SDEs driven by L\'evy noise.\\

The computational cost of our method depends on the value of $\varepsilon$ required for sufficient convergence: to evolve the dynamics to time $t=1$ $n=1/\eps$ iterations of the map are required. This is to be compared with the Euler-Maruyama method which requires $n=1/\Delta t$ iterations. What time step $\Delta t$ or what value of $\eps$ would be necessary depends on the SDE under consideration. Currently our theory does not provide convergence rates which would allow to better assess the required computational cost. The numerical examples provided in Section~\ref{sec:numerics} and \ref{sec:numerics2}, however, are promising.\\

We make a final remark on the general approach taken in this work of unravelling a stochastic differential equation into a deterministic multi-scale system, which may seem counter-intuitive to the scientist who views SDEs as reduced systems of complex multi-scale deterministic systems. By passing from deterministic multi-scale dynamics to an SDE representing the slow variables, modellers gain (amongst other things) the numerical advantage of avoiding to have to deal with resolving stiff multi-scale dynamics and hence needing to apply prohibitively small time steps. This has been one of the many reasons to resort to stochastic parameterisations as applied in molecular dynamics and in climate science \cite{LeimkuhlerMatthews,GottwaldEtAl17}. Here we go in the opposite direction. The issue of stiffness, however, does not arise as we work directly within the framework of maps whereas modellers consider continuous time multi-scale systems which must then be discretised with all the associated numerical issues. 

%%%%%%%%%%%%%%%%%%%%%%%%%%%%%%%%%%%%%%%%%%%%%%%%%%

 \paragraph{Acknowledgements}
 GAG would like to thank Giles Vilmart for bringing the problem of using Euler-Maruyama for non-Lipschitz SDEs to us, and also for many enlightening discussions. GAG and IM would also like to thank Ben Goldys for helpful discussions. We would like to thank John Nolan for generously sharing his software package STABLE with us. This research began during an International Research Collaboration Award at the University of Sydney, and support continued via the visitor program at the Sydney Mathematical Research Institute. The research of IM was supported in part by a European Advanced Grant StochExtHomog (ERC AdG 320977).

%%%%%%%%%%%%%%%%%%%%%%%%%%
%\bibliographystyle{plain}
%\bibliography{Thaler}

\end{document}